\documentclass{amsart}

\usepackage{etex}
\usepackage{amsmath}
\usepackage{amsthm}
\usepackage{amssymb}
\usepackage[frame,cmtip,arrow,matrix,line,graph,curve]{xy}
\usepackage{graphpap, color, paralist}
\usepackage{pstricks}
\usepackage[mathscr]{eucal}
\usepackage{mathabx}
\usepackage[pdftex,colorlinks,citecolor=blue,urlcolor=blue]{hyperref}
\usepackage{tikz}
\usetikzlibrary{calc,graphs,graphs.standard}
\usepackage{graphicx}
\usepackage{subcaption}
\usepackage{verbatim}
\usepackage{MnSymbol}
\usepackage{url}
\usepackage{array,multirow}

\setlength{\oddsidemargin}{0in}
\setlength{\evensidemargin}{0in}
\setlength{\marginparwidth}{0in}
\setlength{\marginparsep}{0in}
\setlength{\marginparpush}{0in}
\setlength{\topmargin}{0in}
\setlength{\headsep}{15pt}
\setlength{\footskip}{0in}
\setlength{\textheight}{9in}
\setlength{\textwidth}{6.5in}
\setlength{\parskip}{4pt}
\linespread{1.2}

\usepackage{amsmath}
\usepackage{amsthm,amsfonts,amssymb,mathrsfs}
\usepackage{epic,eepic}
\usepackage{yfonts}
\usepackage{cleveref}
\usepackage{paralist,enumerate}

\newtheorem{theorem}{Theorem}[section]
\newtheorem{proposition}[theorem]{Proposition}
\newtheorem{lemma}[theorem]{Lemma}
\newtheorem{corollary}[theorem]{Corollary}

\theoremstyle{definition}
\newtheorem{definition}[theorem]{Definition}

\newtheorem{conjecture}[theorem]{Conjecture}
\newtheorem{remark}[theorem]{Remark}


\newcommand{\FF}{ \ensuremath{\mathbb{F}}}

\makeatletter
\def\moverlay{\mathpalette\mov@rlay}
\def\mov@rlay#1#2{\leavevmode\vtop{%
		\baselineskip\z@skip \lineskiplimit-\maxdimen
		\ialign{\hfil$\m@th#1##$\hfil\cr#2\crcr}}}
\newcommand{\charfusion}[3][\mathord]{
	#1{\ifx#1\mathop\vphantom{#2}\fi
		\mathpalette\mov@rlay{#2\cr#3}
	}
	\ifx#1\mathop\expandafter\displaylimits\fi}
\makeatother

\newcommand{\lk}{{\mathrm{lk}}}
\newcommand{\st}{\mathrm{st}}


\numberwithin{equation}{section}

\begin{document}
\title{Balanced triangulations on few vertices and an implementation of cross-flips}

\author[L. Venturello]{Lorenzo Venturello}
\email{lorenzo.venturello@uni-osnabrueck.de}
\address{
	Universit\"{a}t Osnabr\"{u}ck,
	Fakult\"{a}t f\"{u}r Mathematik,
	Albrechtstra\ss e 28a,
	49076 Osnabr\"{u}ck, GERMANY
}

\date{\today}

\thanks{
	The author was supported by the German Research Council DFG GRK-1916.
}
\keywords{simplicial complex, combinatorial manifolds, balanced, cross-flips}
\subjclass[2010]{05E45, 57Q15, 52B05}	

\begin{abstract}
	A $d$-dimensional simplicial complex is balanced if the underlying graph is $(d+1)$-colorable. We present an implementation of cross-flips, a set of local moves introduced by Izmestiev, Klee and Novik which connect any two PL-homeomorphic balanced combinatorial manifolds. As a result we exhibit a vertex minimal balanced triangulation of the real projective plane, of the dunce hat and of the real projective space, as well as several balanced triangulations of surfaces and 3-manifolds on few vertices. In particular we construct small balanced triangulations of the 3-sphere that are non-shellable and shellable but not vertex decomposable.
\end{abstract}	
	
\maketitle

\thispagestyle{empty}
\section{Introduction}
\noindent
The study of the number of faces in each dimension that a triangulation of a manifold $M$ can have is a very classical and hard problem in combinatorial topology. Even for the case of triangulated spheres the characterization of these numbers is a celebrated unsolved problem, known as the $g$-conjecture. On an apparently simpler level one can ask the following question: what is the minimum number of vertices needed to triangulate a manifold $M$? Again the picture is far from being complete. An interesting tool to approach these and other kind of problems are \emph{bistellar flips}, a finite set of local moves which preserves the PL-homeomorphism type, and suffices to connect any two \emph{combinatorial triangulations} of a given manifold (equivalently, triangulations of PL-manifolds).
Bj\"{o}rner and Lutz \cite{BjLu} designed a computer program called BISTELLAR, employing bistellar flips in order to obtain triangulations on few vertices and heuristically recognize the homeomorphism type. This tool led to a significant number of small or even vertex-minimal triangulations which are listed in The Manifold Page \cite{Lu}, along with many other interesting examples (see also \cite{BeLuRandom} and \cite{BjLu}).\\
In this article we focus on the family of \emph{balanced} simplicial complexes, i.e., $d$-dimensional complexes whose underlying graph is $(d+1)$-colorable, in the classical graph theoretic sense. Many questions and results in face enumeration have balanced analogs (see for instance \cite{IKN, JKM, Juhnke:Murai:Novik:Sawaske, KN}. In particular we can ask the following question: what is the minimum number of vertices that a balanced triangulation of a manifold $M$ can have? 
Izmestiev, Klee and Novik introduced a finite set of local moves called \emph{cross-flips}, which preserves balancedness, the PL-homeomorphism type, and suffices to connect any two balanced combinatorial triangulations of a manifold. We provide primitive computer program implemented in Sage \cite{sagemath} to search the set of balanced triangulations of a manifold and we obtain the following results:
\begin{itemize}
	\item We find balanced triangulations of surfaces on few vertices. In particular we describe the unique vertex minimal balanced triangulation of $\mathbb{R}\mathbf{P}^{2}$ on 9 vertices.
	\item We find a balanced triangulation of the dunce hat on $11$ vertices. \Cref{section_dunce} is devoted to the proof of its vertex-minimality.
	\item In \Cref{dimension_3} we discuss balanced triangulations of $3$-manifolds on few vertices. In particular we exhibit a vertex minimal balanced triangulation of $\mathbb{R}\mathbf{P}^{3}$ on 16 vertices with interesting symmetries, and triangulations of the connected sums $(S^2\times S^1)^{\#2}$ and $(S^2\dtimes S^1)^{\#2}$ that belong to the balanced Walkup class.
	\item Finally in \Cref{section_decomposition} we construct balanced 3-spheres on few vertices that are non-shellable and shellable but not vertex decomposable, using results in knot theory.   
\end{itemize}
The source code and the list of facets of all simplicial complexes appearing in this paper are made available in \cite{GitRepo}.

\section{Preliminaries}
\noindent
An \emph{abstract simplicial complex} $\Delta$ on $\left[ n\right] $ is a collection of elements of $2^{\left[ n\right] }$ that is closed under inclusion. The elements $F\in\Delta$ are called \emph{faces}, and those that are maximal w.r.t. inclusion are called \emph{facets}. A simplicial complex is uniquely determined by its facets: for elements $F_i\in2^{\left[ n\right] }$ we define the complex \emph{generated} by $\left\lbrace F_1,\dots,F_m\right\rbrace$ as
$$\left\langle F_1,\dots,F_m \right\rangle:=\left\lbrace F\in2^{\left[ n\right] } : F\subseteq F_i, \text{ for some }i=1,\dots, m  \right\rbrace.$$
A \emph{subcomplex} of $\Delta$ is any simplicial complex $\Phi\subseteq\Delta$, and such a subcomplex $\Phi$ on $V\subseteq[n]$ is \emph{induced} if for every subset $G\subseteq 2^V$ such that $G\in\Delta$ then $G\in\Phi$. 
The \emph{dimension} of a face $F$ is the integer $\dim(F):=\left|F \right|-1$, and the dimension of a simplicial complex is the maximal dimension of its facets. $0$-dimensional and $1$-dimensional faces are called \emph{vertices} and \emph{edges}, and faces that are maximal w.r.t. inclusion are called \emph{facets}. A complex whose facets all have the same dimension is called \emph{pure}. We denote with $f_i(\Delta)$ the number of faces of $\Delta$ of dimension $i$, and we collect them together in the \emph{$f$-vector} $f(\Delta)=(f_{-1}(\Delta),f_0(\Delta),\dots,f_{\dim(\Delta)}(\Delta))$.  Given two simplicial complexes $\Delta$ and $\Gamma$, their \emph{join} is defined to be 
$$\Delta*\Gamma:=\left\lbrace F\cup G: F\in\Delta, G\in\Gamma \right\rbrace.$$
In particular, for two vertices $i,j\notin\Delta$ the operations $\Delta*\left\langle \left\lbrace i\right\rbrace  \right\rangle$ and $\Delta*\left\langle \left\lbrace i\right\rbrace ,\left\lbrace j\right\rbrace  \right\rangle$ are respectively the \emph{cone} and the \emph{suspension} over $\Delta$. To every face $F\in\Delta$ we associate  the two simplicial complexes
$$\lk_{\Delta}(F):=\left\lbrace G\in\Delta : F\cup G\in\Delta, F\cap G=\emptyset \right\rbrace,$$
and
$$\st_{\Delta}(F):=\left\langle F\right\rangle *\lk_{\Delta}(F),$$ 
called the \emph{link} and the \emph{star} of $\Delta$ at $F$, which describe the "local" properties of $\Delta$. There is a canonical way to associate to an abstract simplicial complex $\Delta$ a topological space, denoted by $\left| \Delta\right|$, and via this correspondence the terminology and the operations defined above represent discrete analogs of well known tools from classical topology. On the other hand, given a topological space $X$, a \emph{triangulation} of $X$ is any simplicial complex $\Delta$ such that $\left|\Delta \right|\cong X$. For example the complex $\partial\Delta_{d+1}:=\left\langle [d+2]\setminus\left\lbrace i\right\rbrace , i\in[d+2]\right\rangle $ is a standard triangulation of the $d$-sphere $S^{d}$. More specifically we define the following. 
\begin{definition}
	A pure $d$-dimensional simplicial complex $\Delta$ is a \emph{combinatorial $d$-sphere} if $\left| \Delta\right| $ is PL-homeomor\-phic to $\left| \partial\Delta_{d+1}\right|$. A pure connected $d$-dimensional simplicial complex is a (closed) \emph{combinatorial $d$-manifold} if the link of each vertex is a combinatorial $(d-1)$-sphere. 
\end{definition}
\noindent
Even though there is a subtle difference between the class of combinatorial manifolds and that of simplicial complexes homeomorphic to a manifold (often called simplicial manifolds), they are known to coincide for $d\leq 3$ (see \Cref{dimension_3_sub} for more details in the case $d>3$). For a fixed field $\FF$ a relaxation of the above definitions is given by the class of \emph{$\FF$-homology $d$-manifolds}, that is pure $d$-dimensional simplicial complexes such that $\widetilde{H}_i(\lk_{\Delta}(F);\FF)\cong \widetilde{H}_i(S^{d-\dim(F)-1};\FF)$ holds for every nonempty face $F$ and for every $i\geq 0$ (i.e., $\lk_{\Delta}(F)$ is a \emph{$\FF$-homology $(d-\dim(F)-1)$-sphere}). \\
In this paper we study a family of complexes with an additional combinatorial property, introduced by Stanley in \cite{St79}.
\begin{definition}
	A $d$-dimensional simplicial complex $\Delta$ on $[n]$ is \emph{balanced} if there is a map $\kappa:[n]\longrightarrow[d+1]$, such that $\kappa(i)\neq\kappa(j)$ for every $\left\lbrace i,j \right\rbrace\in\Delta$. 
\end{definition}
\noindent
In words $\Delta$ is balanced if the graph given by its vertices and edges is $(d+1)$-colorable in the classical graph theoretic sense, therefore we often refer to the elements in $[d+1]$ as colors, and to the preimage of a color as \emph{color class}. Although a priori the map $\kappa$ is part of the data defining a balanced complex, in all the examples considered in this paper $\kappa$ is unique up to permutations of the colors. We turn our attention to balanced triangulations of interesting topological spaces. As a guide example we consider the $d$-dimensional complex
\begin{equation}\label{crossp}
\partial\mathcal{C}_{d+1}:=\left\langle \left\lbrace 0\right\rbrace ,\left\lbrace v_0\right\rbrace \right\rangle*\dots*\left\langle \left\lbrace d\right\rbrace ,\left\lbrace v_d\right\rbrace \right\rangle,
\end{equation} 
on the set $\left\lbrace 0,\dots,d,v_0,\dots,v_d\right\rbrace $. This is indeed a balanced vertex minimal triangulation of $S^d$, and it is in particular isomorphic to the boundary of the $(d+1)$-dimensional \emph{cross-polytope}.
In general it is possible to turn any triangulation $\Delta$ of a topological space into a balanced one by considering its \emph{barycentric subdivision} $\text{Bd}(\Delta)$, defined as
$$\text{Bd}(\Delta):=\left\lbrace \left\lbrace v_{F_1},\dots,v_{F_m} \right\rbrace : F_1\subsetneq\dots\subsetneq F_m, F_i\in\Delta \right\rbrace.$$ 
Indeed more generally for the order complex of a ranked poset, the rank function gives a coloring which provides balancedness (see \cite{St79}).  
Among the many results on face enumeration that have been recently proved to have balanced analogs we focus on a work of Izmestiev, Klee and Novik \cite{IKN}, which specializes the theory of bistellar flips to the balanced setting. In their work the following operation preserving balancedness is defined.
\begin{definition}\label{cross-flips}
	Let $\Delta$ be a pure $d$-dimensional simplicial complexes and let $\Phi\subseteq\Delta$ be an induced subcomplex that is PL-homeomorphic to a $d$-ball and that is isomorphic to a subcomplex of $\partial\mathcal{C}_{d+1}$. The operation
	$$\Delta\longmapsto\chi_{\Phi}(\Delta):=\Delta\setminus\Phi \cup \left( \partial\mathcal{C}_{d+1}\setminus\Phi \right)$$
	is called a \emph{cross-flip} on $\Delta$.
\end{definition}
\begin{figure}[h]
	\centering
	\includegraphics[scale=0.75]{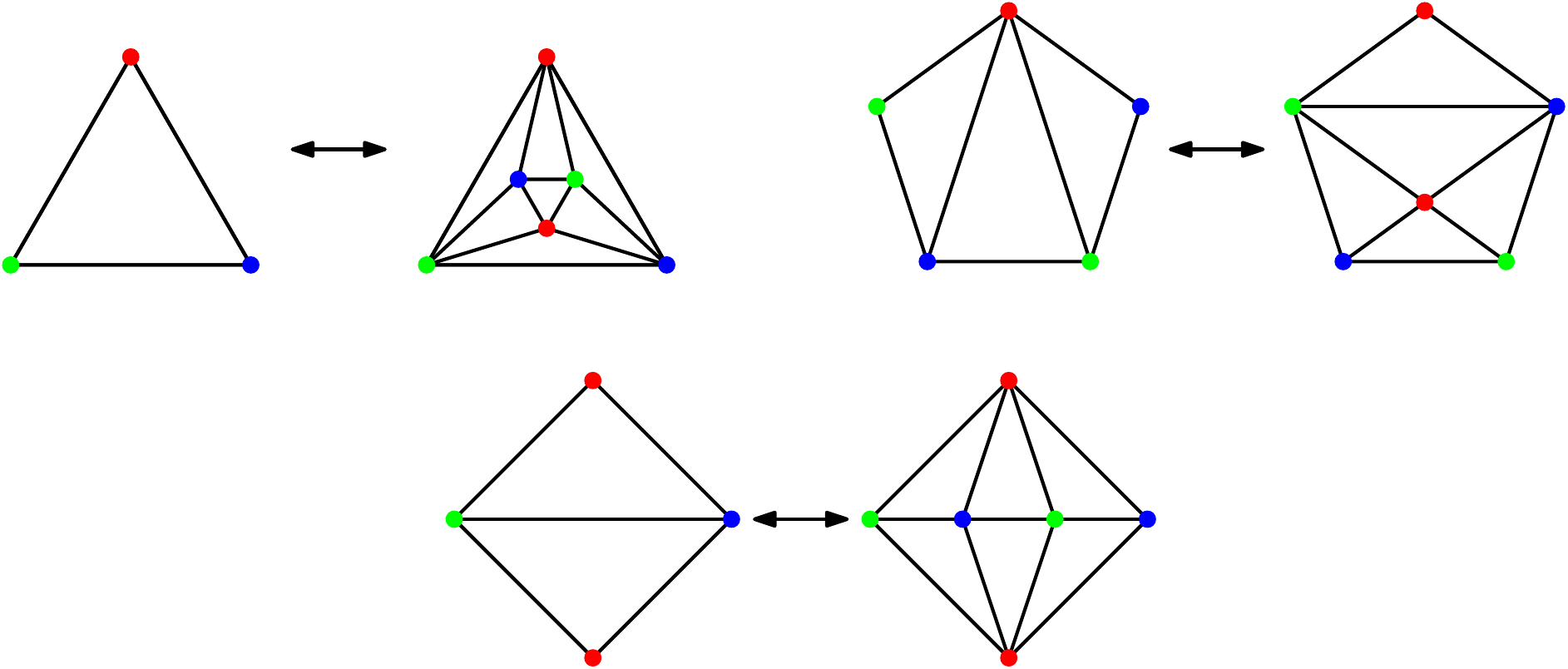}
	\caption{All non-trivial basic cross-flips for $d=2$.}
	\label{2_dim}
\end{figure} 
\noindent
In \cite{IKN} the authors require the subcomplexes $\Phi$ and $\partial\mathcal{C}_{d+1}\setminus\Phi$ to be \emph{shellable} (see \Cref{section_decomposition} for a definition), but since all the subcomplexes we consider in our implementation satisfy this condition we do not include it in \Cref{cross-flips}.
We now describe an interesting family of subcomplexes of $\partial\mathcal{C}_{d+1}$: for $0\leq i \leq d+1$ define
$$\Phi_i:=\begin{cases}
\left\langle \left\lbrace v_{0} \right\rbrace \right\rangle * \left\langle \left\lbrace i+1\right\rbrace ,\left\lbrace v_{i+1}\right\rbrace \right\rangle *\dots* \left\langle \left\lbrace d\right\rbrace ,\left\lbrace v_{d}\right\rbrace \right\rangle & \text{for } i=0 \\
\left\langle \left\lbrace 0,\dots,i-1,v_{i} \right\rbrace \right\rangle * \left\langle \left\lbrace i+1\right\rbrace ,\left\lbrace v_{i+1}\right\rbrace \right\rangle *\dots* \left\langle \left\lbrace d\right\rbrace ,\left\lbrace v_{d}\right\rbrace \right\rangle & \text{for } 1\leq i \leq d \\
\left\langle \left\lbrace 0,\dots,d\right\rbrace \right\rangle
& \text{for } i = d+1
\end{cases},$$
and let $\Phi_I:=\bigcup_{i\in I}\Phi_i$, for every $I\subseteq [d+1]$. It is not hard to see that those complexes are indeed shellable subcomplexes of the boundary of the $(d+1)$-dimensional cross-polytope in \ref{crossp}. A cross-flip replacing a subcomplex $\Phi_I$ with its complement $\Phi_J$ (note that this family is closed under taking complement w.r.t. $\partial\mathcal{C}_{d+1}$) is called a \emph{basic cross-flip}.
The basic cross-flip replacing $\Phi_{\left\lbrace 0\right\rbrace} $ with $\partial\mathcal{C}_{d+1}\setminus \Phi_{\left\lbrace 0\right\rbrace} \cong \Phi_{\left\lbrace 0\right\rbrace}$ is referred to as \emph{trivial flip}, because it clearly does not affect the combinatorics, and every non-trivial basic cross-flips either increases or decreases the number of vertices. We refer to the former as \emph{up-flips} and to the latter as \emph{down-flips}. Moreover two distinct sets $I\neq J\subseteq[d+1]$ might lead to isomorphic subcomplexes $\Phi_I\cong\Phi_J$, and certain basic cross-flips can be generated (i.e. written as combination) by some others. As an example, the flips in the second line  of \Cref{2_dim} (we count the arrows separately) can be obtained via a combination of the four moves in the first row. These issues, as well as a description of the possible $f$-vectors of the complexes $\Phi_I$, have been studied in \cite{2018arXiv180406270J}.
\begin{theorem}[\cite{2018arXiv180406270J}]\label{reducedCrossFlips}
	There are precisely $2^{d+1}-2$ non isomorphic non-trivial basic cross-flips in dimension $d$. Moreover $2^{d}$ of them suffice to generate them all. 
\end{theorem}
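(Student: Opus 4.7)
The plan is to combine a partition argument on the facets of $\partial\mathcal{C}_{d+1}$ with an inductive decomposition for the generation statement.

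\textbf{Partition of facets.} Writing each facet $F\in\partial\mathcal{C}_{d+1}$ as $F=\{x_0,\ldots,x_d\}$ with $x_j\in\{j,v_j\}$, I would introduce the invariant $\sigma(F)$, defined to be the smallest $j$ for which $x_j=v_j$ (or $\sigma(F)=d+1$ if no such $j$ exists). A direct check against the case formula defining $\Phi_i$ shows that $F\in\Phi_i$ if and only if $\sigma(F)=i$. Consequently the subcomplexes $\Phi_0,\ldots,\Phi_{d+1}$ partition the facets of $\partial\mathcal{C}_{d+1}$, the map $I\mapsto\Phi_I$ from subsets of $\{0,\ldots,d+1\}$ to subcomplexes is injective, and $\Phi_{I^c}$ is always the facet-wise complement of $\Phi_I$ inside the cross-polytope sphere.

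\textbf{Counting.} Since $\chi_I$ and $\chi_{I^c}$ are mutual reverses, basic cross-flips modulo reversal are in bijection with the $2^{d+1}$ unordered pairs $\{I,I^c\}$. Two pairs must be discarded: $\{\emptyset,\{0,\ldots,d+1\}\}$ is not a legitimate cross-flip because neither of its sides is a proper $d$-ball, and $\{\{0\},\{1,\ldots,d+1\}\}$ represents the trivial flip, since the swap $0\leftrightarrow v_0$ induces a color-preserving isomorphism $\Phi_{\{0\}}\cong\Phi_{\{1,\ldots,d+1\}}$. Removing these two pairs leaves exactly $2^{d+1}-2$ non-trivial basic cross-flips.

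\textbf{Generation.} For the second assertion I would exhibit a family $\mathcal{G}$ of $2^d$ non-trivial basic cross-flips and prove, by induction on a suitable complexity measure on the index set $J$, that every other flip $\chi_J$ decomposes as a composition of moves from $\mathcal{G}$. The inductive step should ``peel off'' a single atom $\Phi_i$ from $\Phi_J$ at a time, each peeling realised as a generator applied to the current intermediate complex. The precise choice of $\mathcal{G}$ would be dictated by requiring that every peeling corresponds to an induced subcomplex of the current complex isomorphic to a member of $\mathcal{G}$.

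\textbf{Main obstacle.} Once the partition argument is in place the counting is bookkeeping; the substance of the proof lies in the generation step. At every intermediate stage of the inductive decomposition one must verify that the current complex contains an induced subcomplex isomorphic to the $\Phi_I$ associated with the next generator, and that this induced-ness condition is preserved throughout the sequence of moves. Choosing $\mathcal{G}$ and the order of application so that this propagation is automatic is where I expect the main difficulty to lie.
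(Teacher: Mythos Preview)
This theorem is quoted from \cite{2018arXiv180406270J} and is not proved in the present paper, so there is no proof here to compare your proposal against. On its own merits the proposal has a genuine gap in the counting step.

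Your partition of the facets of $\partial\mathcal{C}_{d+1}$ via the invariant $\sigma(F)\in\{0,\ldots,d+1\}$ is correct, as is the observation that $\Phi_{I^c}$ is the facet-wise complement of $\Phi_I$. The error is the assertion that basic cross-flips modulo reversal biject with the $2^{d+1}$ unordered pairs $\{I,I^c\}$. A cross-flip is determined by the \emph{isomorphism type} of $\Phi_I$ as a balanced complex, not by $\Phi_I$ as a literal subcomplex, and your injectivity of $I\mapsto\Phi_I$ into subcomplexes says nothing about isomorphism types. Distinct pairs $\{I,I^c\}\neq\{J,J^c\}$ can and do yield isomorphic flips. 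Already for $d=2$ one has $\Phi_{\{2\}}\cong\Phi_{\{3\}}$ (both single triangles), so the pairs $\{\{2\},\{0,1,3\}\}$ and $\{\{3\},\{0,1,2\}\}$ represent the same flip; likewise $\Phi_{\{1\}}\cong\Phi_{\{2,3\}}$ and $\Phi_{\{1,2\}}\cong\Phi_{\{1,3\}}$ via color-preserving isomorphisms. Your six surviving pairs therefore collapse to only three undirected non-trivial flips. The theorem, however, counts \emph{directed} flips (up-flips and down-flips are distinguished throughout the paper, and Table~\ref{tab:table1} displays $14=2^{4}-2$ of them for $d=3$), of which there are six for $d=2$. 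Your numerical answer happens to be correct only because two factor-of-two errors cancel: overcounting from ignoring isomorphisms among the $\Phi_I$, and undercounting from passing to unordered complementary pairs. What actually has to be proved is that the $2^{d+1}-1$ complexes $\Phi_I$ for $\emptyset\neq I\subseteq\{0,\ldots,d\}$ are pairwise non-isomorphic; this is the substance of the cited result and is not touched by your argument.

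The generation part is a statement of intent rather than an argument: neither the generating family $\mathcal{G}$ nor the complexity measure is specified, and the induced-subcomplex verification that you correctly flag as the crux is not carried out.
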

\noindent
The case of surfaces has been also studied in \cite{MuSu}. The interest in cross-flips, and in particular in basic cross-flips, is due to the following result.
\begin{theorem}[\cite{IKN}]\label{cross-flipsIKN}
	Let $\Delta$ and $\Gamma$ be balanced combinatorial $d$-manifolds. Then the following conditions are equivalent:
	\begin{itemize}
		\item $\left| \Delta\right| $ and $\left| \Gamma\right| $ are PL-homeomorphic;
		\item $\Delta$ and $\Gamma$ are connected by a sequence of cross-flips;
		\item $\Delta$ and $\Gamma$ are connected by a sequence of basic cross-flips.
	\end{itemize} 
\end{theorem}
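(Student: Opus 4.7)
The strategy is to prove the three equivalences separately, observing that two of them are essentially formal while the remaining one contains all of the content. The implication (3)$\Rightarrow$(2) is immediate from the definition, since basic cross-flips are in particular cross-flips. For (2)$\Rightarrow$(1), one notes that a cross-flip replaces an induced subcomplex $\Phi$ (a PL $d$-ball, since it is isomorphic to a shellable subcomplex of $\partial\mathcal{C}_{d+1}$) with $\partial\mathcal{C}_{d+1}\setminus\Phi$, which is another PL $d$-ball sharing with $\Phi$ the same boundary PL-sphere; this is the standard "ball-swap" operation and therefore preserves the PL-homeomorphism type. One should also verify that cross-flips preserve balancedness (the coloring of $\partial\mathcal{C}_{d+1}$ is compatible with that of $\Delta$ on the boundary of $\Phi$, so the new complex inherits a proper coloring) and the combinatorial manifold structure (this is where the hypothesis that $\Phi$ be induced plays its role, to ensure links remain combinatorial spheres).

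The main content is the implication (1)$\Rightarrow$(3). The plan is to leverage Pachner's theorem, which asserts that two PL-homeomorphic combinatorial $d$-manifolds can be connected by a sequence of bistellar flips (ignoring balancedness). The task is then to transfer each bistellar flip into a sequence of basic cross-flips acting on balanced triangulations. The natural bridge between the two worlds is the barycentric subdivision $\text{Bd}(\cdot)$, which canonically turns an arbitrary triangulation into a balanced one. The key auxiliary statement to prove is therefore: for a balanced combinatorial $d$-manifold $\Delta$, the barycentric subdivision $\text{Bd}(\Delta)$ is reachable from $\Delta$ by a sequence of basic cross-flips, and similarly for $\Gamma$ and $\text{Bd}(\Gamma)$. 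If this is established, then each bistellar flip in the Pachner sequence between $\Delta$ and $\Gamma$ can be carried out at the level of barycentric subdivisions, and each such flip--realized inside a sufficiently large starred neighborhood--can be simulated by a finite prescribed list of basic cross-flips, since all local moves happen inside a PL-ball which, after subdivision, embeds in $\partial\mathcal{C}_{d+1}$.

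The implication (2)$\Rightarrow$(3) follows from a separate combinatorial argument: any induced shellable $d$-ball $\Phi\subseteq\Delta$ isomorphic to a subcomplex of $\partial\mathcal{C}_{d+1}$ has complement $\partial\mathcal{C}_{d+1}\setminus\Phi$ also of the same form, so the corresponding cross-flip equals a union $\Phi_I\mapsto\Phi_J$ for suitable $I,J\subseteq[d+1]$. Decomposing this move into single-step basic flips is then a direct induction on $|I|$, peeling off one $\Phi_i$ at a time.

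The hardest step will be the barycentric-subdivision simulation, i.e., showing that $\Delta\sim_{\text{cross-flip}}\text{Bd}(\Delta)$ and that a bistellar flip on $\Delta$ lifts to cross-flips on $\text{Bd}(\Delta)$. The delicate point is that $\text{Bd}(\Delta)$ introduces many new vertices, each needing to be created by an up-flip whose local structure (an induced copy of some $\Phi_I$) must be verifiable inside the ambient manifold without destroying balancedness or the manifold property elsewhere. One must carefully work inside the star of each face of $\Delta$ and build the subdivision face-by-face through basic up-flips of increasing dimension, checking at each stage that the intermediate complex remains balanced and combinatorial, and that the required subcomplexes $\Phi_i$ are induced. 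Once this local machinery is in place, the proof concludes by concatenating: $\Delta\rightsquigarrow\text{Bd}(\Delta)\rightsquigarrow\cdots\rightsquigarrow\text{Bd}(\Gamma)\rightsquigarrow\Gamma$, with the middle arrows implementing Pachner's sequence after subdivision.
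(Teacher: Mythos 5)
This theorem is quoted from \cite{IKN}; the paper does not reprove it, so there is no in-paper proof to compare against. Evaluating your proposal on its own merits, the outline of (3)$\Rightarrow$(2) and (2)$\Rightarrow$(1) is fine (these are the formal parts), but the hard direction contains genuine gaps.

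The step that would fail is your claim that, after barycentric subdivision, a bistellar-flip region ``embeds in $\partial\mathcal{C}_{d+1}$.'' The boundary of the $(d+1)$-cross-polytope has exactly $2d+2$ vertices, and its subcomplexes form a very small, rigid family; a barycentrically subdivided PL ball has far more vertices and in general admits no simplicial embedding into $\partial\mathcal{C}_{d+1}$. So the proposed local simulation of a Pachner move simply does not type-check against the definition of a cross-flip. Relatedly, the pivotal auxiliary claim that $\Delta$ is cross-flip connected to $\text{Bd}(\Delta)$ is asserted but not established, and it is far from automatic: in fact \Cref{bd_irreducible} of this very paper shows that $\text{Bd}(\Delta)$ is \emph{irreducible} for $d\geq 3$ (no down-flip is available at all), which already tells you that the naive ``build the subdivision by up-flips, then reduce'' picture cannot be stated so casually. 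Finally, your (2)$\Rightarrow$(3) argument rests on the assertion that every induced shellable $d$-ball $\Phi\subseteq\partial\mathcal{C}_{d+1}$ is one of the $\Phi_I$; this is false. The $\Phi_I$ are a small, specially constructed family, not an enumeration of all shellable subballs of $\partial\mathcal{C}_{d+1}$, and reducing arbitrary cross-flips to basic ones is a substantive theorem in \cite{IKN}, not the one-line induction you sketch. The high-level instinct (Pachner's theorem plus some bridge to balanced subdivisions) matches the flavor of the real argument, but as written the key lemmas are either missing or wrong.
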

\noindent	 
Essentially \Cref{cross-flipsIKN} states that any two balanced PL-homeomorphic combinatorial manifolds can be transformed one into the other by a sequence of a finite number of flips. This serves as a motivation to develop an implementation of this moves, as it was done in the setting of bistellar flips by Bj\"{o}rner and Lutz in \cite{BjLu} with BISTELLAR. In particular our goal is to find balanced triangulations of a given manifold on few vertices, since taking barycentric subdivision typically leads to large complexes.
\begin{remark}
	We conclude this section by offering a way to visualize the results above. Consider a graph whose vertices are all the balanced combinatorial triangulation of a certain manifold $M$, and whose edges are basic cross-flips. We call this graph the \emph{cross-flip graph} of $M$, and observe that \Cref{cross-flipsIKN} states that this graph is connected. Furthermore we can associate to $M$ another connected graph on the same vertex set, but with edges the sufficient flips guaranteed in \Cref{reducedCrossFlips}. We call this graph the \emph{reduced cross-flip graph}. \Cref{big_graph} shows a plot of a subgraph of the reduced cross-flip graph, displayed by ranking the (all non-isomorphic) complexes according to the number of vertices (see the numbers on the left). Here we stop performing up-flips on a sphere $\Delta$ if $f_0(\Delta)\geq 14$. Note that there is no guarantee of enumerating \emph{all} the balanced spheres in this way: as an example all the spheres in \Cref{big_graph} with $f_0(\Delta)=16$ satisfy $f_1(\Delta)\leq 72$, whereas in \cite{Zh17} a balanced $3$-sphere on $16$ vertices with $96$ edges is constructed.
\end{remark}   
\begin{table}[h!]
	\[\begin{array}{l|l|l|l}
	J&f(\Phi_J) &K & f(\partial\mathcal{C}_{d+1}\setminus\Phi_J)=f(\Phi_K) \\ \hline
	\left[3\right]&(1,4,6,4,1)&[0,1,2,3]&(1,8,24,32,15)\\
	\left[ 2\right] &(1,5,9,7,2)&[0,1,2]&(1,8,24,31,14)\\
	\left[2,3\right]&(1,6,12,10,3)&[0,1,3]&(1,8,24,30,13) \\
	\left[1\right]&(1,6,13,12,4)&[0,1]&(1,8,23,28,12)\\
	\left[1,3\right]&(1,7,16,15,5)&\left[0,2,3\right]&(1,8,23,27,11)\\
	\left[1,2\right]&(1,7,17,17,6)&\left[0,2\right]&(1,8,22,25,10)\\
	\left[1,2,3\right]&(1,7,18,19,7)&\left[0,3\right]&(1,8,21,23,9)\\
	\left[0\right]&(1,7,18,20,8)&\left[0\right]&(1,7,18,20,8)
	\end{array}
	\]
	\caption{$f$-vectors of $3$-dimensional basic cross-flips.}
	\label{tab:table1}
\end{table}
\begin{figure}[h]
	\centering
	\includegraphics[scale=0.74]{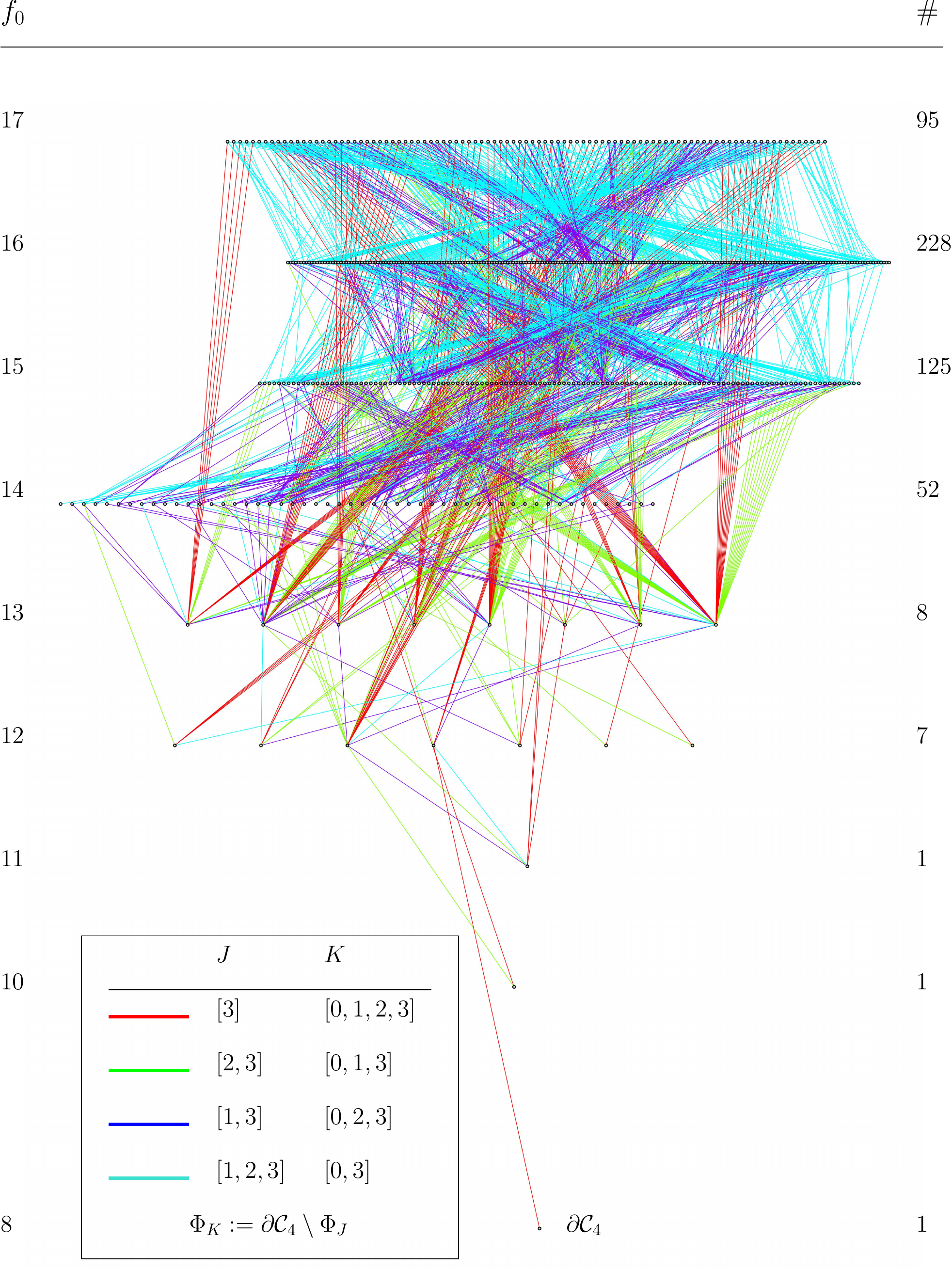}
	\caption{The subgraph of the reduced cross-flips graph of $S^3$ obtained applying the 4 sufficient cross-flips $\Delta\longmapsto\chi_{\Phi_{J}}(\Delta)$, starting from $\partial\mathcal{C}_4$. Edges represent flips, and different colors correspond to different flips. Up-flips are performed for $f_0<14$.}
	\label{big_graph}
\end{figure} 
\section{The implementation}
\noindent
The main purpose of our implementation is to obtain small, possibly vertex minimal, balanced triangulations of surfaces and 3-manifolds. To achieve this we start from the barycentric subdivision of a non-balanced triangulation, many of which can be found in \cite{Lu}, and reduce them using cross-flips. We first establish some notations: a vertex $v\in\Delta$ is called \emph{removable} if there exists a down flip $\chi_{\Phi}$ such that $v\notin\chi_{\Phi}(\Delta)$. A balanced simplicial complex without removable vertices is called \emph{irreducible}. In \Cref{big_graph} irreducible triangulations of $S^3$ can be visualized as vertices not connected with any lower vertex.
\begin{remark}
	While a vertex minimal balanced triangulation is clearly irreducible, the converse is not true. Indeed irreducible triangulations are quite many, and they can have a large vertex set, as shown in \Cref{bd_irreducible}.
\end{remark} 
\begin{lemma}
	Let $\Delta$ be a pure $d$-dimensional balanced simplicial complex. If a vertex $v\in \Delta$ is removable then $f_0(\lk_{\Delta}(v))=2d$. 
\end{lemma}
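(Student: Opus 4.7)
The plan is to decode what it means for $v$ to be removable, then read off the size of $\lk_\Delta(v)$ directly from the structure of $\partial\mathcal{C}_{d+1}$. By \Cref{cross-flips}, the hypothesis gives an induced subcomplex $\Phi\subseteq\Delta$ which is PL-homeomorphic to a $d$-ball, together with an isomorphism $\iota\colon\Phi\to\Phi'$ onto some subcomplex $\Phi'\subseteq\partial\mathcal{C}_{d+1}$, such that $v$ is not a vertex of
$$\chi_\Phi(\Delta)\;=\;(\Delta\setminus\Phi)\;\cup\;(\partial\mathcal{C}_{d+1}\setminus\Phi).$$
First I would rewrite this single condition as two: (i) every face of $\Delta$ containing $v$ already lies in $\Phi$, so that $\st_\Delta(v)=\st_\Phi(v)$; and (ii) setting $v':=\iota(v)$, no face of $\partial\mathcal{C}_{d+1}$ through $v'$ lies outside $\Phi'$, so that $\st_{\partial\mathcal{C}_{d+1}}(v')=\st_{\Phi'}(v')$. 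Condition (i) comes from the fact that any face through $v$ not in $\Phi$ would survive into $\Delta\setminus\Phi$, and (ii) from the dual observation that any face through $v'$ not in $\Phi'$ would cause $v'$ to reappear in $\partial\mathcal{C}_{d+1}\setminus\Phi'$, hence in $\chi_\Phi(\Delta)$.

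Next, because $\iota$ restricts to an isomorphism $\st_\Phi(v)\cong\st_{\Phi'}(v')$, combining (i) and (ii) yields
$$\st_\Delta(v)\;\cong\;\st_{\partial\mathcal{C}_{d+1}}(v'),$$
so in particular the two links have the same number of vertices. The final step is a direct computation on the right-hand side: by (\ref{crossp}) the boundary of the cross-polytope factors as the join $\langle\{0\},\{v_0\}\rangle*\cdots*\langle\{d\},\{v_d\}\rangle$ of $d+1$ pairs of isolated points, so the link of any single vertex is the join of the remaining $d$ pairs, namely $\partial\mathcal{C}_{d}$, which has exactly $2d$ vertices. This gives $f_0(\lk_\Delta(v))=2d$.

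I do not expect a serious obstacle; the only subtle point is to extract both (i) and (ii) from the single requirement $v\notin\chi_\Phi(\Delta)$ and to notice that (ii) does \emph{not} require $\Phi'$ to be an induced subcomplex of $\partial\mathcal{C}_{d+1}$---one only uses that a facet of $\partial\mathcal{C}_{d+1}$ through $v'$ not belonging to $\Phi'$ would automatically be a facet of $\partial\mathcal{C}_{d+1}\setminus\Phi'$ through $v'$. The hypothesis that $\Delta$ is pure $d$-dimensional is used implicitly to guarantee that $\st_\Delta(v)$ is a pure $d$-complex, matching the pure $d$-complex $\st_{\partial\mathcal{C}_{d+1}}(v')$; balancedness itself plays no role beyond ensuring that cross-flips are a well-posed operation.
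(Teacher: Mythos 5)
Your proof is correct, and it takes a genuinely different route from the paper's. The paper argues topologically: from $v\notin\chi_\Phi(\Delta)$ it only extracts that $v$ lies in the \emph{interior} of the ball $\Phi$, so that $\lk_\Phi(v)$ is a balanced $(d-1)$-sphere, and then invokes the classification fact that the only balanced $(d-1)$-sphere subcomplex of $\partial\mathcal{C}_{d+1}$ (on $d$ color classes) is $\partial\mathcal{C}_d$. You instead squeeze a second, purely combinatorial piece of information out of the removability condition -- your condition (ii), namely $\st_{\partial\mathcal{C}_{d+1}}(\iota(v))\subseteq\Phi'$ -- which, together with (i), gives directly $\st_\Delta(v)\cong\st_{\partial\mathcal{C}_{d+1}}(\iota(v))$, after which the count $2d$ is immediate from the join decomposition of the cross-polytope boundary. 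Your argument is more elementary: it avoids both the fact that links of interior vertices of balls are spheres and the sphere-classification step, and in fact never uses the hypothesis that $\Phi$ is PL-homeomorphic to a ball (only that it is induced, isomorphic to a subcomplex of $\partial\mathcal{C}_{d+1}$, and makes $v$ disappear). The paper's route is shorter on the page but leans on more machinery; yours is longer but self-contained, and your closing remark that balancedness of $\Delta$ plays no real role here is accurate.
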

\begin{proof}
	If the vertex $v$ is removable then there exists an induced subcomplex $\Gamma\subseteq\Delta$ that is isomorphic to a subcomplex of $\partial\mathcal{C}_{d+1}$, such that $\Gamma$ is a $d$-ball and $v$ is in the interior of $\Gamma$, because vertices on the boundary are preserved. Since the link of a vertex in the interior of a balanced $d$-ball is a balanced $(d-1)$-sphere, and the only such subcomplex of $\partial\mathcal{C}_{d+1}$ is isomorphic to $\partial\mathcal{C}_{d}$ it follows that $\lk_{\Delta}{(v)}\cong\partial\mathcal{C}_{d}$, hence $f_0(\lk_{\Delta}{(v)})=2d$. 
\end{proof}
\begin{corollary}\label{bd_irreducible}
	Let $\Delta$ be a combinatorial $d$-manifold, with $d\geq 3$. Then the barycentric subdivision $\text{Bd}(\Delta)$ is irreducible.
\end{corollary}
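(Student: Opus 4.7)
The plan is to invoke the preceding lemma directly: since any removable vertex $v$ in a $d$-dimensional balanced complex must satisfy $f_0(\lk(v)) = 2d$, it suffices to show that \emph{every} vertex of $\text{Bd}(\Delta)$ has strictly more than $2d$ vertices in its link when $d \geq 3$. This turns a global structural question into a pointwise inequality that only depends on the local structure of $\Delta$.

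The key combinatorial input is the description of the link of a vertex in a barycentric subdivision. Vertices of $\text{Bd}(\Delta)$ correspond to non-empty faces $F \in \Delta$, and for a face of dimension $k$ any chain that extends to one containing $F$ splits uniquely into the chain of proper subfaces of $F$ and the chain of proper cofaces. This gives a join decomposition
$$\lk_{\text{Bd}(\Delta)}(v_F) \;\cong\; \text{Bd}(\partial F) \,*\, \text{Bd}(\lk_\Delta(F)),$$
from which one reads off
$$f_0\bigl(\lk_{\text{Bd}(\Delta)}(v_F)\bigr) \;=\; (2^{k+1}-2) + N(F),$$
where $N(F)$ is the number of non-empty faces of $\lk_\Delta(F)$.

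I would then exploit that $\Delta$ is a combinatorial $d$-manifold, so $\lk_\Delta(F)$ is a combinatorial $(d-k-1)$-sphere whenever $k<d$, and hence has at least $2^{d-k+1}-2$ non-empty faces (this minimum is realized only by the boundary of a $(d-k)$-simplex). For a facet $F$ the contribution $N(F)=0$ and the formula collapses to $2^{d+1}-2$. The remaining task is an elementary inequality: check that $2^{k+1}+2^{d-k+1}-4 > 2d$ for $0 \leq k \leq d-1$, and $2^{d+1}-2>2d$ for $k=d$, under the hypothesis $d \geq 3$. Since the left-hand side is convex in $k$, its minimum over the integer range is attained near $k = d/2$, and a direct check for $d=3$ together with a standard exponential-versus-linear estimate closes the argument.

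The main pitfall I anticipate is getting the link description right in the degenerate small cases $k=0$ (vertex) and $k=d$ (facet), where one of the two join factors collapses to $\{\emptyset\}$; one has to confirm that the counting formula remains valid and that the $d=3$ case, in which $2d=6$ is smallest and thus the constraint tightest, actually clears the bound.
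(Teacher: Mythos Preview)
Your proposal is correct and follows essentially the same argument as the paper: both invoke the preceding lemma, use the join decomposition $\lk_{\text{Bd}(\Delta)}(v_F)\cong \text{Bd}(\partial F)*\text{Bd}(\lk_\Delta(F))$, bound the face count of $\lk_\Delta(F)$ below by that of $\partial\Delta_{d-k}$, and reduce to the inequality $2^{k+1}+2^{d-k+1}-4>2d$ minimized near $k=d/2$. Your explicit treatment of the extremal cases $k=0$ and $k=d$ is slightly more careful than the paper's, but the substance is identical.
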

\begin{proof}
	For every vertex $v_F\in \text{Bd}(\Delta)$, corresponding to a $k$-face $F\in\Delta$ we have $\lk_{\text{Bd}(\Delta)}(v_F)\cong\text{Bd}(\partial\Delta_{k})*\text{Bd}(\lk_{\Delta}(F))$. Moreover, since $\lk_{\text{Bd}(\Delta)}(v_F)$ is a combinatorial $(d-k-1)$-sphere, it has at least $f_i(\partial\Delta_{d-k})$ $i$-faces. Hence
	\begin{align*}
	f_0(\lk_{\text{Bd}(\Delta)}(v_F))&=2^{k+1}-2+\sum_{i=0}^{d-k-1}f_i(\lk_{\Delta}(F))\\
	&\geq 2^{k+1}-2+\sum_{i=0}^{d-k-1}f_i(\partial\Delta_{d-k})\\
	&= 2^{d-k+1}+2^{k+1}-4.
	\end{align*}
	For a fixed $d$ the last expression is minimized when $k=\frac{d}{2}$, and in that case we have $f_0(\lk_{\text{Bd}(\Delta)}(v_F))\geq 4\left( 2^{\frac{d}{2}}-1\right)$, which is strictly larger than $2d$, for $d\geq 3$. 
\end{proof}
\noindent
Note that \Cref{bd_irreducible} holds for more general classes, e.g., homology manifolds, for which the inequality $f_i(\lk_{\Delta}(F))\geq f_i(\partial\Delta_{d-k})$ still holds for every $k$-face $F$ and for every $i$.
The computation above also shows that for $d=2$ every vertex $v_F$ arising from the subdivision of an edge $F$ has degree $4$, hence it is potentially removable. Since the barycentric subdivision is the standard way of turning any triangulation into a balanced triangulation of the same space, for $d\geq 3$ the result above represents a bad news. Indeed \Cref{bd_irreducible} states that to reduce such a subdivision we are forced to start with some up-flips and to increase the number of vertices, which for the case of barycentric subdivisions is typically quite large.
Our code presents two main challenges:
\begin{itemize}
	\item List all the flippable subcomplexes of any type;
	\item Decide which type of move to apply and which subcomplex to flip. 
\end{itemize} 
Already in dimension 1 the problem of deciding if a fixed complex has a subcomplex isomorphic to a given one, known as the subgraph isomorphism problem, is NP-complete. However since graphs are computationally well studied it is convenient to reduce the problem to the one dimensional case, to employ structures and algorithms designed for graphs. We say that a pure strongly connected $d$-dimensional simplicial complex is a \emph{pseudomanifold} if every $(d-1)$-face is contained in exactly two facets.
\begin{definition}
	For a pure $d$-dimensional pseudomanifold $\Delta$ the \emph{dual graph} $G(\Delta)$ is the graph on the vertex set $\left\lbrace F \in\Delta: \dim(F)=d\right\rbrace $ and with edge set $\left\lbrace \left\lbrace F_i,F_j\right\rbrace: \dim(F_i\cap F_j)=d-1  \right\rbrace$. 
\end{definition} 
\noindent
Given a $d$-dimensional pseudomanifold $\Delta$ and a pure subcomplex $\Phi\subseteq\partial\mathcal{C}_{d+1}$ that is a ball, we list all subgraphs of $G(\Delta)$ that are isomorphic to $G(\Phi)$ using an algorithm such as the VF2 algorithm \cite{CoFoSaVe}, from which we only keep those that correspond to induced subcomplexes. This check can be performed rather efficiently by direct inspection on the faces of $\Delta$.
\begin{remark}
	In general, the information encoded in the dual graph is not sufficient to reconstruct the simplicial complex. Let $\Phi$ be a $d$-ball. If $\Delta$ is a combinatorial $d$-manifold and $\Gamma\subseteq\Delta$ a subcomplex, then $G(\Gamma)\cong G(\Phi)$ does not imply $\Gamma\cong\Phi$. However, if $\Delta$ is in addition balanced and $\Phi\subseteq\partial\mathcal{C}_{d+1}$ then the following holds. 
\end{remark}
\begin{lemma}
	Let $\Delta$ be a balanced combinatorial $d$-manifold, $\Gamma\subseteq\Delta$ a pure subcomplex and $\Phi\subseteq\partial\mathcal{C}_{d+1}$ a $d$-ball. If $G(\Gamma)\cong G(\Phi)$, then $\Gamma\cong\Phi$.
\end{lemma}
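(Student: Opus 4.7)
The plan is to build a simplicial isomorphism $\psi:\Gamma\to\Phi$ out of $\varphi$ by first matching colors between the two balanced complexes. Both dual graphs carry a natural edge-coloring by $[d+1]$: for adjacent facets $F$ and $F'$, the symmetric difference $F\triangle F'$ consists of exactly two vertices, which must share a color (since each of $F$ and $F'$ already contains one vertex of every color), and this common color is assigned to the edge $\{F,F'\}$. I would first record two local features of this coloring that hold in both $G(\Gamma)$ and $G(\Phi)$: (i) at each facet, distinct incident edges carry distinct colors, because in the combinatorial manifolds $\Delta$ and $\partial\mathcal{C}_{d+1}$ every codimension-one face lies in exactly two facets, so each facet has at most one neighbor in each color direction; and (ii) in any $4$-cycle $F_1,F_2,F_3,F_4$ of the dual graph, opposite edges have the same color. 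Property~(ii) is a short symmetric-difference computation: expanding $F_1\triangle F_3$ as $(F_1\triangle F_2)\triangle(F_2\triangle F_3)$ yields a $4$-element set whose vertex colors form the multiset $\{c_{12},c_{12},c_{23},c_{23}\}$, and comparing with the analogous expansion via $F_4$, together with~(i), forces $c_{12}=c_{34}$ and $c_{23}=c_{41}$.

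Next, I would fix a facet $F_0$ of $\Gamma$ and set $F_0'=\varphi(F_0)$. The isomorphism $\varphi$ induces a bijection between the edges incident to $F_0$ and those incident to $F_0'$, which by~(i) descends to a partial bijection on the color sets appearing at $F_0$ and $F_0'$; extend it (arbitrarily) to a permutation $\tau:[d+1]\to[d+1]$. The crucial claim is that $\chi_\Phi(\varphi(e))=\tau(\chi_\Gamma(e))$ for every edge $e$ of $G(\Gamma)$. To verify it I would propagate $\tau$ along walks in $G(\Gamma)$ and check consistency using~(ii): around any $4$-cycle, the $\tau$-values derived at the two endpoints of a pair of opposite edges agree, because opposite edges carry matching colors in both dual graphs. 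The argument succeeds provided that, up to $4$-cycle relations, all closed walks in $G(\Phi)$ bound --- equivalently, the $2$-complex on $G(\Phi)$ obtained by attaching a $2$-cell along every $4$-cycle is simply connected. This is where the ball hypothesis on $\Phi$ enters, and since $\Phi$ is a shellable subcomplex of $\partial\mathcal{C}_{d+1}$ (as remarked in the paper), a shelling of $\Phi$ supplies an inductive proof: each shelling step attaches a new facet along a $(d-1)$-ball, contributing enough new $4$-cycles in the dual graph to fill any loops that the newly added edges create.

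With color-preservation in hand, the simplicial isomorphism almost writes itself. Define $\psi$ on $F_0$ by sending the color-$c$ vertex of $F_0$ to the color-$\tau(c)$ vertex of $F_0'$, and extend along walks in $G(\Gamma)$: when traversing an edge $\{F^{(j)},F^{(j+1)}\}$ of color $c$, send the unique vertex of $F^{(j+1)}\setminus F^{(j)}$ to the unique vertex of $\varphi(F^{(j+1)})\setminus\varphi(F^{(j)})$, which carries color $\tau(c)$ by color-preservation. Well-definedness of $\psi$ on vertex sets again follows from the $4$-cycle argument together with the same simply connectedness used above, and injectivity is forced because at each step the inherited $d$ vertices of $F^{(j+1)}$ already have the correct $\tau$-colored images. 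By construction $\psi$ maps every facet $F$ of $\Gamma$ to $\varphi(F)\in\Phi$, so it extends to a simplicial isomorphism $\Gamma\cong\Phi$. I expect the main obstacle to lie in the second paragraph: establishing the global permutation $\tau$ --- equivalently, the simply connectedness of the dual $2$-complex of $\Phi$ --- is precisely where the hypothesis that $\Phi$ is a $d$-ball is indispensable, and the cleanest way through is likely to carry out the shelling induction explicitly rather than appeal to a soft topological argument.
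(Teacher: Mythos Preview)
Your outline is a genuine proof strategy and, as far as I can see, it can be carried through; but it takes a considerably longer route than the paper's, and it never uses the one structural feature of $\partial\mathcal{C}_{d+1}$ that makes the lemma nearly immediate: the cross-polytope has \emph{exactly two} vertices in each color class.

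The paper's argument runs as follows. One first asserts that the dual-graph isomorphism lets us regard $\Gamma$ as $\Phi$ after possibly identifying some vertices (without disturbing the $d$- and $(d-1)$-faces); since $\Delta$ is balanced, any such identification must glue same-colored vertices. Now comes the short punchline: if $v$ and $w$ are the two vertices of some color $c$ in $\Phi\subseteq\partial\mathcal{C}_{d+1}$, then because $G(\Phi)$ is connected and the direction-$c$ coordinate separates facets containing $v$ from those containing $w$, there must exist a $(d-1)$-face $F$ with both $F\cup\{v\}$ and $F\cup\{w\}$ in $\Phi$. Identifying $v$ with $w$ would therefore collapse these two adjacent facets into one, contradicting that $\varphi$ is a bijection on facets. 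Hence no nontrivial identification occurs and $\Gamma\cong\Phi$.

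By contrast, your approach builds a global color permutation $\tau$ and a simplicial map $\psi$ by propagation, and for consistency you need that the $2$-complex on $G(\Phi)$ obtained by filling all $4$-cycles is simply connected. This is true --- the $4$-cycles of $G(\Phi)$ are exactly the interior $(d-2)$-faces of $\Phi$, so that $2$-complex is the dual $2$-skeleton of the ball $\Phi$ --- but establishing it (via shelling or duality) is precisely the nontrivial step you flag, and the paper avoids it entirely. What your argument buys is generality: it would apply to balanced $d$-balls $\Phi$ that are not subcomplexes of $\partial\mathcal{C}_{d+1}$. For the lemma as stated, though, the two-vertices-per-color observation gives a two-line proof of the injectivity that your simply-connectedness machinery is working to secure.
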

\begin{proof}
	Assume that $\Gamma\ncong\Phi$. Since $G(\Gamma)\cong G(\Phi)$, $\Gamma$ is isomorphic to $\Phi$ modulo identifying vertices, without affecting $d$- and $(d-1)$-dimensional faces. Since $\Delta$ is balanced, the identification has to preserve the coloring, that is, a vertex $v$ has to be identified with a vertex $w$ of the same color. Finally we observe that if $v$ and $w$ are vertices of the same color in $\Phi\subseteq\partial\mathcal{C}_{d+1}$, then there exists a $(d-1)$-dimensional face $F\in \Phi$ such that $F\cup\{v\}\in \Phi$ and $F\cup\{w\}\in \Phi$. For an explicit description of all possible subcomplexes $\Phi$ we refer to \cite[Section 4.3]{2018arXiv180406270J}. The simplicial map identifying $v$ and $w$ then also identifies $F\cup\{v\}$ and $F\cup\{w\}$, which is in contradiction with $G(\Gamma)\cong G(\Phi)$.
\end{proof}
\noindent
Moreover once a flip $\Delta\longmapsto\chi_{\Phi}(\Delta)=:\Delta'$ is performed we do not need to rerun the check on the entire complex to list all the flippable subcomplexes of $\Delta'$, but it suffices to update the list locally, by considering only the induced subcomplexes of $\Delta'$ that are not induced subcomplexes of $\Delta$. Even though this idea allows to deal with relatively large $3$-dimensional complexes, higher dimensions appear to be still out of reach.\\
For the second problem, namely to decide which subcomplex to flip, we propose and combine two very naive strategies:
given a balanced pseudomanifold $\Delta$ we choose a subcomplex $\Phi$ among those which
\begin{itemize}
	\item maximize $\displaystyle\left| \left\lbrace v\in\chi_{\Phi}(\Delta): f_0(\lk_{\chi_{\Phi}(\Delta)}(v))=2d\right\rbrace \right|$,
	\item maximize $\displaystyle\sum_{v\in\chi_{\Phi}(\Delta),\dim(v)=0}(f_0(\lk_{\chi_{\Phi}(\Delta)}(v)))^2$.  
\end{itemize} 
We maximize the two functions in the order as presented above. With the first condition we simply maximize the number of potentially removable vertices, while maximizing the sum of squares of the vertex degrees forces the new triangulation to have an inhomogeneous degree distribution, and hence some very poorly connected vertices. 
\begin{remark}\label{rem: many irreducible}
	Typically, starting from a large triangulation, we cannot hope to reduce drastically the number of vertices through a sequence consisting only of down-flips, because irreducible triangulations are quite frequent. Even a restricted example like \Cref{big_graph} reveals several irreducible triangulations of $S^3$ on few vertices. We can overcome this inconvenience by interposing a certain number of random up-flips to avoid local minima.
\end{remark}
\begin{remark}
	We make no claim of efficiency, and we do not take into account the time needed to reduce a triangulation. Undoubtedly many details in the implementation can be improved, and the strategies refined. So far we were able to obtain small balanced $f$-vectors of all the $3$-dimensional examples considered.
	In general, the problem of finding subcomplexes of the boundary of the cross-polytope in a triangulation is significantly harder than finding subcomplexes of the boundary of the simplex. This fact, and the issue pointed out in \Cref{rem: many irreducible} make our code slower than the program BISTELLAR. In particular, the technique of simulated annealing employed in \cite{BjLu} does not appear to be effective in this setting.  
\end{remark}
\section{Surfaces and the Dunce Hat}
\noindent
The first complexes we consider are triangulations of compact $2$-manifolds. In this case the number of vertices uniquely determines the remaining entries of the $f$-vector. In \Cref{tab:table3} we display a list of minimal known $f$-vectors of several surfaces, as well as the $f$-vectors of the corresponding barycentric subdivisions, which is always the starting input for our procedure. Finally in the fourth column we report the smallest known $f$-vectors of balanced triangulations found via the program.
\subsection{Real projective plane}
We found a unique vertex minimal balanced triangulation $\Delta^{\mathbb{R}\mathbf{P}^{2}}_{9}$ of the real projective plane, which is depicted in two ways in \Cref{rp}. The $f$-vector is $f(\Delta^{\mathbb{R}\mathbf{P}^{2}}_{9})=(1,9,24,16)$. The non balanced minimal one has 6 vertices.
\begin{figure}[h]
	\centering
	\includegraphics{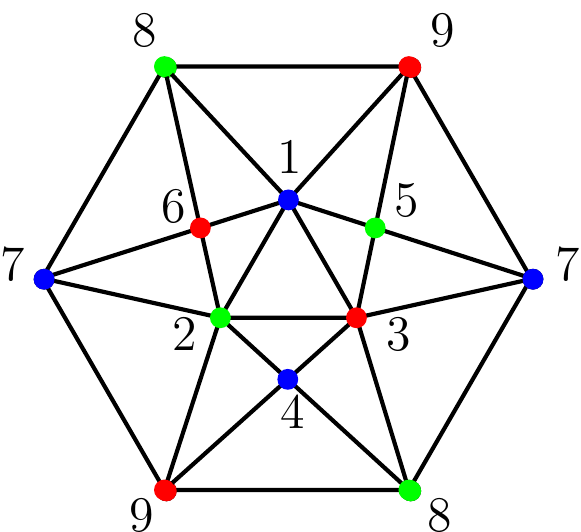}
	\quad\quad\quad\quad\quad
	\includegraphics{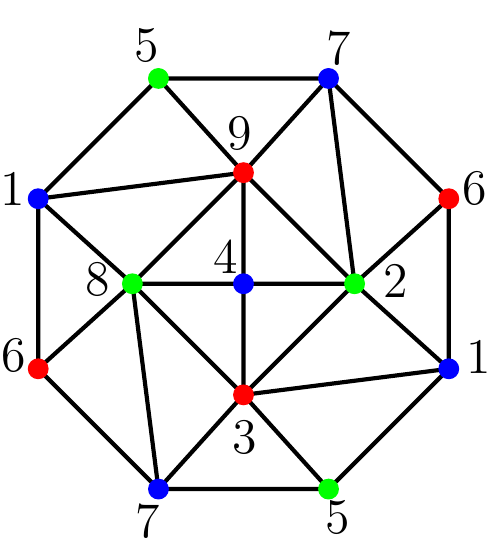}
	\caption{The simplicial complex $\Delta^{\mathbb{R}\mathbf{P}^{2}}_{9}$ represented as the quotient of a disk in two different ways.}
	\label{rp}
\end{figure} 
\begin{lemma}\label{rp2}
	The simplicial complex $\Delta^{\mathbb{R}\mathbf{P}^{2}}_{9}$ is a vertex (hence every $f_i$) minimal balanced triangulation of the projective plane.
\end{lemma}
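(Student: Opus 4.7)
The plan is to rule out balanced triangulations of $\mathbb{R}\mathbf{P}^2$ on at most $8$ vertices by a counting and extremal argument, using that the $f$-vector of a surface is determined by $f_0$ and the Euler characteristic, and that the $1$-skeleton of a balanced complex sits inside a complete multipartite graph.

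First I would record the standard identities. For a triangulated closed surface $\Delta$ with $\chi(\Delta)=1$, the equations $f_0-f_1+f_2=1$ and $2f_1=3f_2$ force
\[
f_1=3(f_0-1),\qquad f_2=2(f_0-1).
\]
If moreover $\Delta$ is balanced with color classes of sizes $n_1,n_2,n_3$ summing to $n=f_0$, then the $1$-skeleton of $\Delta$ is a subgraph of the complete tripartite graph $K_{n_1,n_2,n_3}$, so $f_1\le n_1n_2+n_1n_3+n_2n_3$.

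Next I would do the casework on $n\in\{6,7,8\}$. Maximizing the tripartite edge count for fixed $n$ is immediate: for $n=6$ the maximum is $(2,2,2)$ giving $12<15$; for $n=7$ the maximum is $(3,2,2)$ giving $16<18$; for $n=8$ every partition other than $(3,3,2)$ gives at most $20$, while $(3,3,2)$ gives exactly $21$. So if $n=8$ the coloring must be $(3,3,2)$ and, more importantly, the $1$-skeleton of $\Delta$ must equal all of $K_{3,3,2}$.

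The key step, and I expect it to be the only real obstacle, is to rule out $K_{3,3,2}$ as the $1$-skeleton of a balanced triangulation of $\mathbb{R}\mathbf{P}^2$. Label the color classes $A=\{a_1,a_2,a_3\}$, $B=\{b_1,b_2,b_3\}$, $C=\{c_1,c_2\}$. Every triangle is rainbow, hence of the form $\{a_i,b_j,c_k\}$. Each edge $\{a_i,b_j\}$ is contained in at most two triangles of $K_{3,3,2}$, namely $\{a_i,b_j,c_1\}$ and $\{a_i,b_j,c_2\}$; since $\Delta$ is a closed surface each edge lies in exactly two triangles of $\Delta$, so both of these triangles must belong to $\Delta$. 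Letting $i,j$ vary, this forces all $3\cdot 3\cdot 2=18$ rainbow triangles to be facets of $\Delta$. But $f_2=2(8-1)=14$, a contradiction. Hence $n\ge 9$.

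Finally, the triangulation $\Delta^{\mathbb{R}\mathbf{P}^2}_9$ exhibited in \Cref{rp} is balanced and has $9$ vertices, showing the bound is sharp; the $f$-vector formulas above then give minimality in each dimension. Uniqueness, while stated, is not needed for the present lemma, and would be an independent verification (most naturally computer-assisted).
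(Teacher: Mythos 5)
Your proof is correct, but it takes a genuinely different and more elementary route than the paper's. The paper simply invokes Proposition 6.1 of Klee and Novik \cite{KN}, which states that any balanced triangulation of a homology $d$-manifold that is not a homology $d$-sphere has at least three vertices in each of the $d+1$ color classes; since $\mathbb{R}\mathbf{P}^2$ is not a homology sphere this immediately gives $f_0\geq 3\cdot 3=9$. (The paper even sketches the Klee--Novik argument: a color class of size $1$ forces $\Delta$ to be a cone, and a class of size $2$ forces $\Delta$ to be a suspension, both impossible here.) Your approach instead combines the Euler-characteristic relations $f_1=3(f_0-1)$, $f_2=2(f_0-1)$ with the tripartite bound $f_1\leq n_1n_2+n_1n_3+n_2n_3$; this rules out $n\leq 7$ immediately and shows that at $n=8$ the bound is tight only for $(3,3,2)$, so the $1$-skeleton must be all of $K_{3,3,2}$, which you then exclude since the $9$ edges between the two size-$3$ classes would force all $18$ rainbow triangles to be facets, contradicting $f_2=14$. (You should say a word about $n\leq 5$ as well, though the same edge count disposes of those cases.) The paper's route is shorter and generalizes effortlessly to arbitrary dimension; yours is self-contained, avoids the homology-manifold machinery, and as a bonus exposes the extremal structure at $n=8$ explicitly.
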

\begin{proof}
	The claim follows from a result of Klee and Novik ([\cite{KN}, Proposition 6.1]) which states that any balanced triangulation of an homology $d$-manifold $\Delta$ that is not an homology $d$-sphere has at least three vertices in each color class. We briefly recall their argument. Assume there exists a color (say 1) whose class contains less than three vertices: if there is only one vertex colored with 1 then $\Delta$ is a cone, since every facet contains one vertex per color, and hence contractible. In the same way, since every $(d-1)$-face colored with $[d]\setminus \left\lbrace 1\right\rbrace$ is connected to exactly 2 vertices of color 1, if $\Delta$ has precisely two of color 1 is 2, say $v_1$ and $v_2$, then $\Delta$ is a suspension over the homology $(d-1)$-sphere $\lk_{\Delta}(v_1)\cong \lk_{\Delta}(v_2)$. Hence $\Delta$ is an homology sphere. 
\end{proof}
\begin{remark}
	Since triangulated surfaces on 9 vertices are listed in \cite{Lu}, we can check that $\Delta^{\mathbb{R}\mathbf{P}^{2}}_{9}$ is indeed the unique balanced triangulation of $\mathbb{R}\mathbf{P}^{2}$ on 9 vertices.  
\end{remark}
\begin{table}[h!]

	\[\begin{array}{l|l|l|l|l}
	\left| \Delta\right|  & \text{Min }f(\Delta) &  f(\text{Bd}(\Delta)) & \text{Min. Bal. $f$ known} & \text{Notes} \\\hline
	S^2 & (1,4,6,4) &  (1,14,36,24) & (1,6,12,8)^* & \partial\mathcal{C}_{3}\\
	\mathbb{T} & (1,7,21,14) & (1,42,126,84) & (1,9,27,18)^* & \text{see \cite{KN}} \\
	\mathbb{T}^{\#2} & (1,10,36,24) & (1,70,216,144) & (1,12,42,28) &\\
	\mathbb{T}^{\#3} & (1,10,42,28) & (1,80,252,168) & (1,14,54,36)& \\
	\mathbb{T}^{\#4} & (1,11,51,34) & (1,96,306,204)  & (1,14,60,36)& \\
	\mathbb{T}^{\#5} & (1,12,60,40) & (1,112,360,240)  & (1,16,72,48)& \\
	\mathbb{R}\mathbf{P}^{2} & (1,6,15,10) & (1,31,90,60) & (1,9,24,16)^* & \Delta^{\mathbb{R}\mathbf{P}^{2}}_{9}\\
	(\mathbb{R}\mathbf{P}^{2})^{\#2} & (1,8,24,16) & (1,48,144,96) & (1,11,33,22) & \\
	(\mathbb{R}\mathbf{P}^{2})^{\#3} & (1,9,30,20) & (1,59,180,120) & (1,12,39,26) & \\
	(\mathbb{R}\mathbf{P}^{2})^{\#4} & (1,9,33,22) & (1,64,198,132) & (1,12,42,28) & \\
	(\mathbb{R}\mathbf{P}^{2})^{\#5} & (1,9,36,24) & (1,69,216,144) & (1,13,48,32) & \\
	\end{array}
	\]
	\caption{A table reporting some small $f$-vectors of balanced surfaces. The symbol "*" indicates that the corresponding value of $f_0$ is the minimum number of vertices of a balanced triangulation of $|\Delta|$.}
	\label{tab:table3}
\end{table}
\subsection{Dunce hat}\label{section_dunce}
The dunce hat is a topological space which exhibits interesting properties: it is contractible but non-collapsible, and its triangulations are Cohen-Macaulay over any field but not shellable. For the definition of shellability we refer to \Cref{section_decomposition}, while we avoid defining Cohen-Macaulay simplicial complexes here, since they go beyond the aim of this article. The interested reader can find an extensive treatment of this topic in \cite{Stanley-greenBook}. The dunce hat can be visualized as a triangular disk whose edges are identified via a non-coherent orientation. Surprisingly, even if it is possible to construct a balanced triangulation by allowing only 3 vertices to be on the boundary of such disk, the vertex minimal one, which is depicted in \Cref{dunce}, is achieved when the singularity contains 4 vertices. Its $f$-vector is $f(\Delta^{\text{DH}})=(1,11,34,24)$. In the rest of this section we prove that this is indeed the least number of vertices that a balanced triangulation of the dunce hat can have. We call the \emph{singularity} of a triangulation of the dunce hat the $1$-dimensional subcomplex of faces whose link is not a sphere. Note that the link of edges in the singularity consists of three isolated vertices. The set of faces whose link is a sphere is called the \emph{interior} of the dunce hat, and it coincides with the interior of the triangular disk.\\
Since the dunce hat is not a manifold the number of vertices of a triangulation does not uniquely determine the other face numbers, but the number of vertices involved in the singularity also plays a role. If we let $f_0^{\text{sing}}$ be this number, then the $f$-vector $(1,f_0,f_1,f_2)$ of any triangulation of the dunce hat satisfies the following equations:
\begin{equation}\label{f_relations}
	\begin{cases}
	f_0-f_1+f_2=1\\
	f_0^{\text{sing}}+2f_1-3f_2=0
	\end{cases}.
\end{equation}
In particular it holds that $f_1=f_0^{\text{sing}}+3f_0-3$. We proceed now with a sequence of lemmas leading to \Cref{prop_dunce}, proving that the triangulation in \Cref{dunce} is indeed a balanced vertex minimal triangulation of the dunce hat.
\begin{figure}[h]
	\centering
	\includegraphics[scale=0.8]{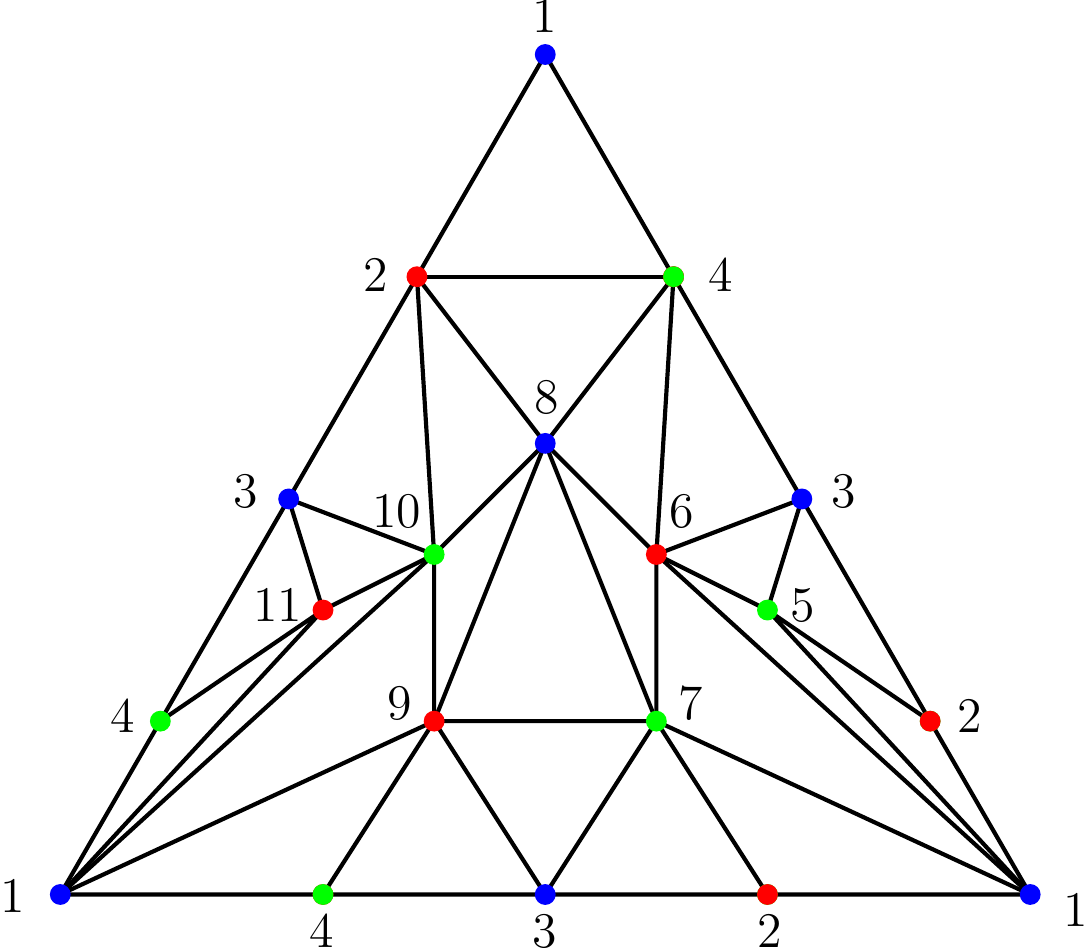}
	\caption{Minimal balanced triangulation $\Delta^{\text{DH}}$ of the dunce hat.}
	\label{dunce}
\end{figure} 

\begin{lemma}\label{at_least_2_per_class}
	Let $\Delta$ be a balanced $2$-dimensional Cohen-Macaulay complex that is not shellable. Then each color class contains at least two vertices.
\end{lemma}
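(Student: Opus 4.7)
My plan is to argue by contradiction: suppose some color class, say color $1$, contains a unique vertex $v$. Since $\Delta$ is Cohen--Macaulay it is pure of dimension $2$, so every facet is a triangle carrying exactly one vertex of each color. In particular $v$ belongs to every facet, so $\Delta$ coincides with the cone $\langle\{v\}\rangle * \lk_\Delta(v)$ over its link at $v$.

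I would then observe that $\lk_\Delta(v)$ is a pure $1$-dimensional complex, i.e. a graph, and that it inherits the Cohen--Macaulay property from $\Delta$ (links preserve Cohen--Macaulayness). A Cohen--Macaulay graph is necessarily connected, and every finite connected graph is shellable: choose any spanning tree, list its edges in a depth-first order so that each new edge meets the previously listed ones at a vertex, and then append the remaining edges in any order. So $\lk_\Delta(v)$ admits a shelling $E_1,\dots,E_m$.

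The last step is to lift this shelling to $\Delta$. Given a shelling $E_1,\dots,E_m$ of a pure complex $\Gamma$, it is standard that $\{v\} * E_1,\dots,\{v\} * E_m$ is a shelling of the cone $\langle\{v\}\rangle * \Gamma$: the intersection $(\{v\}*E_i) \cap \bigcup_{j<i}(\{v\}*E_j)$ is the cone over $E_i \cap \bigcup_{j<i}E_j$, which remains pure of the right codimension. Applying this to $\lk_\Delta(v)$ would yield a shelling of $\Delta$, contradicting the hypothesis that $\Delta$ is not shellable. Hence color $1$ must contain at least two vertices, and by symmetry the same holds for every color.

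There is no real obstacle here; the only thing worth double-checking is the implication ``Cohen--Macaulay $1$-dimensional complex is connected,'' which is immediate since a disconnected graph has $\widetilde{H}_0 \neq 0$ for some proper link (in fact for the empty face) contradicting the Reisner criterion. Everything else is elementary, so I would present the argument in the three clean steps above: reduce to a cone, shell the base graph, and propagate the shelling through the cone operation.
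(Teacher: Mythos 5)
Your proposal is correct and follows essentially the same path as the paper's proof: reduce to a cone over the $1$-dimensional link (the paper cites the argument in its proof of the projective plane lemma, you restate it directly), note that the link is Cohen--Macaulay hence a connected pure graph hence shellable, and conclude since coning preserves shellability. You spell out the shellability of connected graphs and the cone-lifting step in more detail than the paper, which simply invokes them as standard facts, but the logical structure is identical.
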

\begin{proof}
	Assume there exists a color class containing only one vertex $v$.
	As discussed in \Cref{rp2} it follows that $\Delta$ is a cone over the $1$-dimensional subcomplex $\lk_{\Delta}(v)$. It is well known that the links of a Cohen-Macaulay simplicial complex are Cohen-Macaulay (see e.g., \cite{Stanley-greenBook}). But since every $1$-dimensional Cohen-Macaulay complex is shellable, and coning preserves shellability this implies that $\Delta$ is shellable.
\end{proof}
\begin{lemma}\label{at_least_3_per_class}
	Let $\Delta$ be a balanced $2$-dimensional Cohen-Macaulay complex that is not shellable. Assume moreover that every edge of $\Delta$ is contained in at least two triangles. Then each color class contains at least three vertices.
\end{lemma}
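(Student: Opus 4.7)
My plan is to argue by contradiction, in the spirit of \Cref{at_least_2_per_class}. Suppose some color class, which up to relabelling I may take to be class $1$, contains exactly two vertices $v_1$ and $v_2$. Since $\Delta$ is Cohen-Macaulay it is pure of dimension $2$, and since it is properly $3$-colored every triangle contains one vertex of each color. In particular, every triangle of $\Delta$ contains exactly one of $v_1$ or $v_2$. It follows that if $e$ is an edge with endpoints of colors $2$ and $3$, the only possible triangles of $\Delta$ containing $e$ are $\{v_1\} \cup e$ and $\{v_2\} \cup e$, so $e$ lies in at most two triangles. Combined with the hypothesis that every edge lies in at least two triangles, this forces both $\{v_1\} \cup e$ and $\{v_2\} \cup e$ to belong to $\Delta$.

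The next step is to turn this into a global structural statement: I would show that $\lk_{\Delta}(v_1) = \lk_{\Delta}(v_2) =: L$ and that $\Delta$ is the suspension $\{v_1,v_2\} \ast L$. The link equality on edges is immediate from the previous paragraph: the edges of each $\lk_{\Delta}(v_i)$ are exactly the edges of $\Delta$ of color $\{2,3\}$. On vertices, any vertex of color $2$ or $3$ lies in a triangle (by purity), hence on some edge of color $\{2,3\}$, and is therefore adjacent to both $v_1$ and $v_2$. Combining purity with the fact that every facet of $\Delta$ contains $v_1$ or $v_2$ gives $\Delta = \st_{\Delta}(v_1) \cup \st_{\Delta}(v_2) = v_1 \ast L \cup v_2 \ast L = \{v_1,v_2\} \ast L$.

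To close the argument I would invoke standard behaviour of shellability and Cohen-Macaulayness under links and joins. As a link in a Cohen-Macaulay complex, $L$ is itself Cohen-Macaulay, and it is pure of dimension $1$. A pure $1$-dimensional Cohen-Macaulay complex is connected, and a pure connected $1$-dimensional complex is shellable (list its edges so that each new edge shares a vertex with some earlier one, as exploited in the proof of \Cref{at_least_2_per_class}). Since the join of two shellable complexes is shellable, the suspension $\{v_1,v_2\} \ast L$ is shellable, contradicting the hypothesis that $\Delta$ is not shellable.

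The only real hurdle I anticipate is the bookkeeping in the suspension step: one must explicitly rule out the possibility that some vertex of color $2$ or $3$ fails to be adjacent to both of $v_1, v_2$, or that some color-$\{2,3\}$ edge is missed from one of the links. Both of these are handled uniformly by combining purity with the two-triangles hypothesis, but it is the place where the extra assumption about edges genuinely gets used, and it is what distinguishes this lemma from \Cref{at_least_2_per_class}.
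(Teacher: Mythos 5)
Your argument is correct and follows essentially the same route as the paper: reduce to the case of exactly two vertices in one color class, show that the two-triangles hypothesis forces $\Delta$ to be the suspension over the common link $L = \Delta_{[23]}$, observe that $L$ is a $1$-dimensional Cohen--Macaulay complex and hence shellable, and conclude from the fact that suspension preserves shellability. The only small omission is that you should invoke \Cref{at_least_2_per_class} explicitly to rule out the case of a color class with only one vertex, so that the contradiction hypothesis really is "exactly two vertices" and not "at most two".
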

\begin{proof}
	By \Cref{at_least_2_per_class} we can assume that there are only two vertices of color 1, say $v_1$ and $v_2$. Let $\Delta_{\left[23\right]}$ be the subcomplex generated by all faces of $\Delta$ not containing color $1$. Since we assumed that every edge of $\Delta$ is contained in at least two triangles it follows that every edge $e$ of $\Delta_{\left[23\right]}$ is contained in the two triangles $e\cup\{v_1\}$ and $e\cup\{v_2\}$. This implies that $\Delta$ is obtained taking the suspension of $\Delta_{\left[23\right]}$, and hence  $\Delta_{\left[23\right]}=\lk_{\Delta}(v_1)=\lk_{\Delta}(v_2)$.
	In particular $\Delta_{\left[23\right]}$ is Cohen-Macaulay and $1$-dimensional, hence shellable. We deduce that $\Delta$ is the suspension over the shellable complex $\Delta_{\left[23\right]}$ and hence shellable, since suspension preserves shellability.   
\end{proof}
\begin{lemma}\label{f0sing=3}
	If $f_0^{\text{sing}}(\Delta)=3$ then $f_0(\Delta)\geq 10$.
\end{lemma}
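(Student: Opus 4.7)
The plan is to argue by contradiction: assume $f_0(\Delta)\le 9$, deduce the exact combinatorial shape of $\Delta$, and then derive an impossible constraint from the local topology at a distinguished singular vertex.

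First I would apply \Cref{at_least_3_per_class}, which holds because the dunce hat is Cohen-Macaulay, not shellable, and every edge of $\Delta$ lies in at least two triangles (two if interior, three if singular). This forces $f_0=9$ with each color class of size exactly three, and then \eqref{f_relations} gives $f_1 = 27$ and $f_2 = 19$. Since a balanced complex with three vertices per color class has at most $27$ edges, the $1$-skeleton of $\Delta$ is precisely $K_{3,3,3}$. The singular locus is a $3$-cycle on singular vertices $a,b,c$ of pairwise distinct colors. Every non-singular vertex is interior to the dunce hat, so its simplicial link is a $1$-sphere; inside $K_{3,3,3}$ such a link must be a Hamiltonian $6$-cycle, so each non-singular vertex lies in exactly six triangles. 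Summing over the color class of $a$, the singular vertex $a$ lies in $19 - 6 - 6 = 7$ triangles.

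The contradiction comes from the topology of $\lk_\Delta(a)$. Since the dunce hat is the quotient of a triangular disk by its boundary with attaching word $e\cdot e^{-1}\cdot e$, exactly one of the three singular vertices — say $a$ — plays the role of the distinguished corner whose link in the dunce hat is topologically a \emph{dumbbell} (two simple closed curves joined by a simple arc). This forces $\lk_\Delta(a)$ to decompose as a simple path from $c$ to $b$ together with a simple cycle through $c$ and a simple cycle through $b$, pairwise sharing only the vertices $b$ and $c$. Since $\lk_\Delta(a)$ is bipartite on colors $\{2,3\}$, the two cycles have even length at least $4$ and the path has odd length at least $1$; hence $\lk_\Delta(a)$ has at least $1+4+4=9$ edges, contradicting the count of seven derived above. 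The main obstacle will be to justify the dumbbell decomposition rigorously, which one does by writing $\Delta$ as the simplicial quotient of a triangulated disk $D$ whose boundary vertices split into orbits $\{w_0,w_3,w_6\}$, $\{w_1,w_5,w_7\}$, $\{w_2,w_4,w_8\}$ mapping respectively to $a,b,c$, and observing that the three disk-links $\lk_D(w_0),\lk_D(w_3),\lk_D(w_6)$ glue along these identifications to the required path and pair of cycles.
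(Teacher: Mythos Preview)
Your argument is correct and takes a genuinely different route from the paper's.

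The paper works directly in the disk model: the singular edge $e$ of colors $1$ and $2$ appears three times on the boundary of the unfolded disk, and each boundary copy must be completed to a triangle by an \emph{interior} vertex of color $3$; together with the singular color-$3$ vertex $c$ this gives $n_3 \geq 4$, hence $f_0 \geq 10$. This is a short counting argument on a single color class (implicitly using that $\{a,b,c\}$ cannot be a $2$-face, since in the unfolded disk such a face would have all three edges on the $9$-gon boundary).

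Your approach instead singles out the special singular vertex $a$ whose topological link is a dumbbell (as opposed to the theta graph carried by the other two singular vertices), and uses bipartiteness of the link to bound its number of edges from below by $4+4+1 = 9$, contradicting the arithmetic count of $7$ triangles through $a$. One point worth making explicit in your write-up is why the distinguished topological point of the dunce hat must coincide with a simplicial vertex: at an interior point of a singular edge the topological link is the suspension of three points, i.e.\ a theta graph, which is not homeomorphic to a dumbbell; hence the distinguished point is forced to be one of $a,b,c$. With that established, your disk-unfolding computation of $\lk_\Delta(a)$ as the gluing of $\lk_D(w_0)$, $\lk_D(w_3)$, $\lk_D(w_6)$ into a path and two vertex-disjoint cycles is correct.

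The paper's route is shorter and more elementary once the disk model is granted. Yours is more structural --- it pinpoints exactly which vertex obstructs a $9$-vertex triangulation and why --- and it treats the three colors symmetrically rather than isolating one class.
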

	\begin{proof}
		 Observe that by \Cref{at_least_3_per_class} we need at least 9 vertices to triangulate the dunce hat in a balanced way, and the only possible configuration is $(n_1,n_2,n_3)=(3,3,3)$, where $n_i$ is the number of vertices of color $i$. Moreover note that in this case the singularity consists of one vertex per color class. Let us consider the edge $e$ in the singularity containing the colors $1$ and $2$. Since there are three copies of $e$ in the boundary of the disk, each of which needs to be completed to a triangle using an interior vertex of color $3$, we infer that $n_3\geq 4$, because another vertex of color $3$ already lies in the singularity.
	\end{proof}
	\begin{remark}
		We suspect that the bound in \Cref{f0sing=3} is far from being tight. Using our computer program, the smallest balanced triangulation obtained for the case $f_0^{\text{sing}}(\Delta)=3$ has $14$ vertices. However \Cref{f0sing=3} combined with \Cref{no10} suffices for our purpose.
	\end{remark}
	\noindent
	By \emph{bichromatic missing edge} we mean a pair of vertices ${i,j}$, such that $\left\lbrace i,j\right\rbrace \notin \Delta$, and $\kappa(i)\neq\kappa(j)$.
\begin{lemma}\label{at_least_10}
	Let $\Delta$ be a balanced triangulation of the dunce hat. Then $f_0(\Delta)\geq 10$.  Let $m$ be the number of bichromatic missing edges. If $f_0^{\text{sing}}(\Delta)+m\geq 7$ then $f_0(\Delta)\geq 11$.
\end{lemma}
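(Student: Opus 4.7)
The plan is to combine the two linear relations in (\ref{f_relations}) with a natural upper bound on $f_1$ coming from the $3$-coloring, and then read off constraints on $f_0$ and $f_0^{\text{sing}}$.

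First I would collect the ingredients. Since every edge of a dunce hat triangulation lies in either $2$ or $3$ triangles, and dunce hat triangulations are Cohen--Macaulay but never shellable, \Cref{at_least_3_per_class} applies: each color class contains $n_i \geq 3$ vertices, where $n_1, n_2, n_3$ are the sizes of the color classes (so $f_0 = n_1 + n_2 + n_3$). Balancedness gives the upper bound $f_1 \leq n_1 n_2 + n_1 n_3 + n_2 n_3$, and the quantity $m := n_1 n_2 + n_1 n_3 + n_2 n_3 - f_1 \geq 0$ counts bichromatic missing edges. Finally, since the singular locus of the dunce hat is topologically a circle, its triangulation requires $f_0^{\text{sing}} \geq 3$. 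Combining the two equations in (\ref{f_relations}) yields the key identity
\begin{equation*}
f_1 \;=\; f_0^{\text{sing}} + 3 f_0 - 3.
\end{equation*}

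For the first claim, suppose for contradiction $f_0(\Delta) = 9$. The only triple $(n_1,n_2,n_3)$ with each $n_i \geq 3$ summing to $9$ is $(3,3,3)$, giving $n_1 n_2 + n_1 n_3 + n_2 n_3 = 27$ and hence $f_1 \leq 27$. The key identity gives $f_1 = f_0^{\text{sing}} + 24 \geq 27$, so equality must hold throughout and $f_0^{\text{sing}}(\Delta) = 3$. But \Cref{f0sing=3} says $f_0^{\text{sing}}(\Delta) = 3$ implies $f_0(\Delta) \geq 10$, a contradiction.

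For the second claim, suppose $f_0(\Delta) = 10$. The only admissible color-class sizes are $(3,3,4)$, so $n_1 n_2 + n_1 n_3 + n_2 n_3 = 33$. Combining the definition $f_1 = 33 - m$ with the key identity gives
\begin{equation*}
33 - m \;=\; f_0^{\text{sing}}(\Delta) + 27,
\end{equation*}
that is, $f_0^{\text{sing}}(\Delta) + m = 6$. Therefore the hypothesis $f_0^{\text{sing}}(\Delta) + m \geq 7$ rules out $f_0(\Delta) = 10$, and combined with the first claim forces $f_0(\Delta) \geq 11$.

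The main subtlety is not in the chain of inequalities itself but in seeing that balancedness \emph{pins down} the quantity $f_0^{\text{sing}} + m$ exactly once $f_0$ is fixed (to $6$ when $f_0 = 10$). After that observation, the previous structural lemmas — in particular \Cref{at_least_3_per_class} and \Cref{f0sing=3} — do all the work, and the rest is arithmetic on the only admissible color partitions.
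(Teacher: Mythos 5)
Your proof is correct and reaches the same conclusion, but by a genuinely sharper route than the paper's.  The paper's proof inserts the generic bound $n_1n_2+n_1n_3+n_2n_3\leq f_0^2/3$ into the identity $f_1=f_0^{\text{sing}}+3f_0-3$ and solves the resulting quadratic inequality in $f_0$, invoking \Cref{f0sing=3} to justify assuming $f_0^{\text{sing}}\geq 4$; this gives $f_0\geq 9.32$ for the first claim and $f_0\geq 10.18$ for the second, which then round up.  You instead observe that \Cref{at_least_3_per_class} forces the color partition to be \emph{exactly} $(3,3,3)$ when $f_0=9$ and $(3,3,4)$ when $f_0=10$, so that $n_1n_2+n_1n_3+n_2n_3$ is a fixed number ($27$ or $33$), which converts the paper's inequality into the exact identity $f_0^{\text{sing}}+m=n_1n_2+n_1n_3+n_2n_3-(3f_0-3)$, equal to $3$ or $6$ respectively.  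This is more elementary (no quadratic) and is also more informative: the exact value $f_0^{\text{sing}}+m=6$ at $f_0=10$ is precisely the threshold that \Cref{no10} must push past, and your argument makes visible why the bound $7$ in the hypothesis is tight, something the paper's estimate $10.18$ obscures.  Your logical dependencies — $f_0^{\text{sing}}\geq 3$ because the singular locus is a circle, \Cref{at_least_3_per_class} for the lower bound on color classes, and \Cref{f0sing=3} to exclude $f_0^{\text{sing}}=3$ — are the same three the paper uses; your use of \Cref{f0sing=3} via contradiction ($f_0=9$ forces $f_0^{\text{sing}}=3$, which then forces $f_0\geq 10$) is the contrapositive of how the paper invokes it, and is equally valid.
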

\begin{proof}
	For any balanced $2$-dimensional simplicial complex with $n_i$ vertices of color $i$, for $i=1,2,3$, the number of edges is clearly bounded by above by the number of edges of the complete $3$-partite graph $K_{n_1,n_2,n_3}$, which equals $\left| E(K_{n_1,n_2,n_3})\right|=n_1n_2+n_1n_3+n_2n_3$. Combined with \Cref{f0sing=3}, which allows us to assume $f_0^{\text{sing}}(\Delta)\geq 4$, this yields 
	\begin{equation}\label{inequality_dunce}
	f_1(\Delta)=f_0^{\text{sing}}(\Delta)+3f_0(\Delta)-3\leq n_1n_2+n_1n_3+n_2n_3\leq \frac{f_0(\Delta)^2}{3},
	\end{equation}
	where the last inequality follows by maximizing the function $n_1n_2+n_1n_3+n_2n_3$, under the constraint $\sum_{i=1}^{3}n_i=f_0(\Delta)$. Solving the inequality $f_0^{\text{sing}}(\Delta)+3f_0(\Delta)-3\leq \frac{f_0(\Delta)^2}{3}$ for $f_0(\Delta)$ we obtain
	$$f_0(\Delta)\geq\dfrac{9+\sqrt{81+12f_0^{\text{sing}}(\Delta)-36}}{2}.$$
	By \Cref{f0sing=3} we can assume $f_0^{\text{sing}}(\Delta)\geq 4$, from which it follows that $f_0(\Delta)\geq 9,32$.
	The second statement follows in the same way by imposing $f_1(\Delta)\leq n_1n_2+n_1n_3+n_2n_3-m$ in the first inequality, which yields 
	$$f_0(\Delta)\geq\dfrac{9+\sqrt{81+12(f_0^{\text{sing}}(\Delta)+m)-36}}{2}.$$
	Under the assumption $f_0^{\text{sing}}(\Delta)+m\geq 7$ we obtain $f_0(\Delta)\geq 10,18$.
\end{proof}
\noindent
In order to show that the minimum number of vertices for a balanced triangulation of the dunce hat is 11 it remains to show that no such simplicial complex exists with $f_0(\Delta)=10$ and $f_0^{\text{sing}}(\Delta)\in\left\lbrace 3,4,5,6 \right\rbrace$. 
\begin{lemma}\label{no10}
	No balanced triangulation of the dunce hat on 10 vertices exists.
\end{lemma}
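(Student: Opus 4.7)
The plan is to proceed by case analysis on $s := f_0^{\mathrm{sing}}(\Delta)$. By \Cref{at_least_3_per_class} each color class has at least three vertices, so $f_0(\Delta)=10$ forces the class-size triple to be $(n_1,n_2,n_3)=(3,3,4)$ after permutation. Combining equation (\ref{f_relations}) with the trivial bound $f_1(\Delta)\leq n_1 n_2 + n_1 n_3 + n_2 n_3 = 33$ yields the identity $s+m=6$, where $m$ denotes the number of bichromatic missing edges. By \Cref{f0sing=3} and the fact that a singular cycle must have length at least three, the remaining cases are exactly the four pairs $(s,m)\in\{(3,3),(4,2),(5,1),(6,0)\}$.

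Throughout the case analysis I would use three structural features of any balanced triangulation of the dunce hat: (i) the singular subcomplex $\Sigma(\Delta)$ is a cycle of length $s$, because it is the image, under the quotient map from the triangular disk onto the dunce hat, of the triangulated identification edge, itself a simple loop with $s$ subdivision vertices; (ii) every singular edge $e\in\Sigma(\Delta)$ is contained in exactly three triangles of $\Delta$, whose three third vertices all lie in the unique color class absent from $e$; (iii) the complement of $\Sigma(\Delta)$ is an open triangulated $2$-disk with $10-s$ interior vertices and $3s$ boundary edges, obtained from the fundamental triangular disk by removing its boundary.

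For each value of $s$ the plan is to (1) enumerate the proper $3$-colorings of an $s$-cycle consistent with class sizes $(3,3,4)$, noting that for odd $s$ all three colors must appear while for even $s$ both bipartite and three-colored options are possible; (2) use (ii) to record, for every edge of $\Sigma(\Delta)$, the three required link vertices in the complementary color class and check the feasibility of the resulting incidence system against the class sizes; (3) use the $m = 6-s$ missing bichromatic edges, which by (ii) cannot occur on $\Sigma(\Delta)$, to further restrict the admissible colorings; finally (4) compare the forced triangle count against $f_2(\Delta)=s+18$ to derive a contradiction.

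The main obstacle is the case $s=6$, $m=0$: the $1$-skeleton is the entire complete tripartite graph $K_{3,3,4}$, so no missing-edge argument is available to prune the colorings of $\Sigma(\Delta)$. Here I expect the analysis to split according to whether $\Sigma(\Delta)$ is bipartite in two colors or uses all three colors, and in each sub-case to argue that the eighteen link-incidences forced by (ii) overburden the size-four color class: some color-$3$ vertex is forced to lie in the link of several singular edges simultaneously, and the star of that vertex together with the triangles forced by the remaining edges of $\Sigma(\Delta)$ either exceeds $f_2(\Delta)=24$ or leaves the interior $2$-disk of (iii) with too few interior vertices to be triangulated consistently. The smaller-$s$ cases are shorter because the missing bichromatic edges immediately eliminate most candidate colorings of $\Sigma(\Delta)$ via (ii).
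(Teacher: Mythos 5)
Your setup is correct: the class-size triple must be $(3,3,4)$, and the identity $s+m=6$ (which follows from the relations in \eqref{f_relations} together with $f_1\leq 33=|E(K_{3,3,4})|$) pins down the four cases $(s,m)\in\{(3,3),(4,2),(5,1),(6,0)\}$. Your structural observations (i)--(iii) about the singular cycle, the three triangles hanging off each singular edge, and the interior disk are also sound. However, what you submit is a plan, not a proof. Steps (1)--(4) are described at the level of intent, and for the hardest case $s=6$, $m=0$ you explicitly write ``I expect the analysis to split \dots'' and ``either exceeds $f_2(\Delta)=24$ or leaves the interior $2$-disk of (iii) with too few interior vertices.'' That is a guess about how the bookkeeping will turn out, not an argument; you have not verified that the proposed incidence-counting actually closes any of the four cases, and the $s=6$ case is the one where your missing-edge pruning gives you nothing to work with. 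As it stands, the lemma is not proved.

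For comparison, the paper's proof turns the same identity $s+m=6$ the other way: instead of assuming $m=6-s$ and hunting for a structural contradiction, it shows directly that $m\geq 7-s$ in every case, contradicting \Cref{at_least_10}. The engine is a short parity argument you do not use: the link of an \emph{interior} vertex is an even cycle, so an interior vertex of a color class of size $\leq 3$ is adjacent to an even number of the $7$ vertices in the other two classes and therefore omits at least one, producing one missing bichromatic edge per such interior vertex. Counting how many interior vertices of colors $1$ or $2$ are forced to exist (given $s$ and the coloring of the singular cycle) yields $m\geq 4,3,2,1$ for $s=3,4,5,6$ respectively, with only the $s=4$ case needing a short subcase split. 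This avoids enumerating cycle colorings and avoids the $f_2$ bookkeeping entirely. I would suggest reworking your argument along these lines: rather than fixing $m$ and trying to rule out the geometry, lower-bound $m$ using the evenness of interior vertex links and the small color classes, and let the contradiction with $s+m=6$ do the rest.
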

\begin{proof}
	Since any triangulation $\Delta$ of the dunce hat is Cohen-Macaulay, non-shellable, and has the property that every edge is contained in two or three triangles, \Cref{at_least_2_per_class} and \Cref{at_least_3_per_class} imply that every color class of $\Delta$ contains at least three vertices. Assume a balanced triangulation $\Delta$ on 10 vertices exists. There is a unique way to partition 10 vertices in three classes, such that every class contains at least three, namely $(n_1,n_2,n_3)=(3,3,4)$, where $n_i$ is the number of vertices of color $i$.
    Moreover, due to \Cref{at_least_10}, we can assume that $f_0^{\text{sing}}(\Delta)\in\left\lbrace 3,4,5,6 \right\rbrace$. In what follows we denote with $n_i^{\text{sing}}$ the number of vertices in the singularity of color $i$. Note the following facts:
    \begin{itemize}
    	\item[{\sf Claim 1:}] If $f_0^{\text{sing}}(\Delta)=3$ then there are at least four missing bichromatic edges. 
    	\begin{itemize}
    		\item Since the singularity is a triangle it must be colored using all three colors. This implies that $n_1^{\text{sing}}+n_2^{\text{sing}}\leq 2$, and hence there are at least four interior vertices of color $1$ or $2$. Denote with $v$ one of these vertices and assume $\kappa(v)=1$. Since the link of $v$ is a polygon with an even number of vertices, but there are 7 remaining vertices of color $2$ and $3$ ($3$ and $4$ respectively), then there exists a vertex $a$ of color $3$ such that $\left\lbrace v,a\right\rbrace \notin\Delta$. We obtain in this way a bichromatic missing edge for each of the four interior vertices of color $1$ and $2$.
    	\end{itemize}
    	\item[{\sf Claim 2}] If $f_0^{\text{sing}}(\Delta)=4$ then there are at least three missing bichromatic edges. 
    	\begin{itemize}
    		\item If $(n_1^{\text{sing}},n_2^{\text{sing}})\neq(2,2)$ then there are at least three vertices of color $1$ and $2$ in the interior of the disk and the link of these vertices is a polygon with an even number of vertices. Let $v$ be one of these vertices, and assume w.l.o.g. that $\kappa(v)=1$, where $\kappa$ is the coloring map of $\Delta$. Since $n_2+n_3(=n_1+n_3)=7$ only three of the vertices colored with $3$ can appear in the link of $v$. Hence there is at least one missing bichromatic edge for each of the three vertices.
    		\item If $(n_1^{\text{sing}},n_2^{\text{sing}})=(2,2)$ then there are exactly two vertices of color $1$ and $2$ (say $v$ and $w$) in the interior of $\Delta$ and their link is an even polygon. Again since $n_2+n_3=n_1+n_3=7$ each of these two vertices avoids at least one vertex of color 3 and there is at least one missing bichromatic edge for each of the two vertices. Let us denote with $\left\lbrace v,a \right\rbrace$ and $\left\lbrace w,b \right\rbrace$ these missing edges. If $a\neq b$ then since $a$ is in the interior and since the link of $a$ contains at most two vertices of color $1$, it can only contain two vertices of color $2$. Hence there is at least a third bichromatic missing edge $\left\lbrace z,a\right\rbrace$. If $a=b$ then the link of $a$ is the whole singularity (a square) and, in particular, there exists an edge $\left\lbrace x,y \right\rbrace$ in the interior of the disk whose endpoints are in the singularity. This yields a contradiction, because in the case considered two vertices in the singularity are either connected by an edge in the boundary of the disk, or they have the same color. 
    	\end{itemize}
    	\item[{\sf Claim 3}] If $f_0^{\text{sing}}(\Delta)=5$ then there are at least two missing bichromatic edges. 
    	\begin{itemize}
    		\item Since the singularity is a $5$-gon it must be colored using all the three color classes. This implies that $n_1^{\text{sing}}+n_2^{\text{sing}}\leq 4$, and hence there are at least two interior vertices of color $1$ or $2$. As in the previous paragraph each of these vertices must avoid at least one vertex of color $3$, giving rise to two bichromatic missing edges. 
    \end{itemize}
    \item[{\sf Claim 4}] If $f_0^{\text{sing}}(\Delta)=6$ then there is at least one missing bichromatic edge. 
    \begin{itemize}
    	\item If $(n_1^{\text{sing}},n_2^{\text{sing}})=(3,3)$ then the link of every interior vertex is the whole singularity, hence $\Delta$ is the join of a triangulation of $S^1$ with $4$ isolated vertices. This is clearly a contradiction.
    	\item If $(n_1^{\text{sing}},n_2^{\text{sing}})\neq(3,3)$ then there is at least one interior vertex of color $1$ or $2$. Once more its link cannot contain all the $7$ remaining vertices of a different color, so it must miss at least one vertex from the color class $3$. This produces a bichromatic missing edge.
    \end{itemize}
    \end{itemize}
    If we let $m$ be the number of bichromatic missing edges, then the four claims above imply $f_0^{\text{sing}}(\Delta)+m\geq 7$ for any $f_0^{\text{sing}}(\Delta)$. We conclude using \Cref{at_least_10}.
\end{proof}

\begin{proposition}\label{prop_dunce}
	The simplicial complex in \Cref{dunce} is a vertex minimal balanced triangulation of the dunce hat.
\end{proposition}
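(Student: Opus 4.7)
The plan is to combine the chain of lemmas already established in this section, so almost all of the work is behind us. First, I would verify directly from Figure \ref{dunce} that $\Delta^{\text{DH}}$ is indeed a balanced triangulation of the dunce hat with $f_0(\Delta^{\text{DH}}) = 11$ (for instance by checking that the listed facets induce the required edge identifications on the boundary triangle and that the coloring is proper). With this upper bound in hand, the proposition reduces to the lower bound $f_0(\Delta) \geq 11$ for every balanced triangulation $\Delta$ of the dunce hat.

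For the lower bound, I would argue as follows. Since the dunce hat is contractible but non-collapsible, any triangulation is Cohen-Macaulay and non-shellable, so Lemmas \ref{at_least_2_per_class} and \ref{at_least_3_per_class} apply (noting that every edge of such a triangulation is contained in either two or three triangles). This forces each color class to contain at least three vertices. Combined with Lemma \ref{at_least_10}, we immediately obtain $f_0(\Delta) \geq 10$. To rule out the case $f_0(\Delta) = 10$, one invokes Lemma \ref{no10}, which exhausts the possible singularity sizes $f_0^{\text{sing}}(\Delta) \in \{3,4,5,6\}$ (the cases $f_0^{\text{sing}}(\Delta) \geq 7$ being handled by the second part of Lemma \ref{at_least_10} together with the missing-edge counts). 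Consequently $f_0(\Delta) \geq 11$, matching $\Delta^{\text{DH}}$.

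Since everything technical has already been done in the preceding lemmas, the only step that requires any real work in the write-up is the initial verification that $\Delta^{\text{DH}}$ is a balanced triangulation of the dunce hat, and even that is a direct combinatorial check from the figure (or equivalently from the facet list made available in \cite{GitRepo}). There is no substantive obstacle remaining; the proof is essentially a two-line citation chain $\text{Lemma }\ref{at_least_10} + \text{Lemma }\ref{no10} \Longrightarrow f_0 \geq 11$, matched by the construction in Figure \ref{dunce}.
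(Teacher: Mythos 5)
Your proposal is correct and follows essentially the same citation chain as the paper: the paper's proof simply states that the result follows from combining Lemmas \ref{f0sing=3}, \ref{at_least_10}, and \ref{no10}, which together rule out all cases with $f_0(\Delta)\leq 10$ for any singularity size. Your additional upfront remark that one must verify $\Delta^{\text{DH}}$ is genuinely a balanced triangulation of the dunce hat on 11 vertices is implicit in the paper but a reasonable thing to make explicit.
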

\begin{proof}
	The claim follows combining \Cref{f0sing=3}, \Cref{at_least_10} and \Cref{no10}, which show that no such triangulation exists on less than 11 vertices, for any value of $f_0^{\text{sing}}(\Delta)$. 
\end{proof}
\section{3-manifolds}\label{dimension_3}
\noindent
In this section we report some interesting and small balanced triangulations of $3$-manifolds found using our computer program.
\subsection{Real projective space}
We present a peculiar balanced triangulation $\Delta^{\mathbb{R}\mathbf{P}^{3}}_{16}$ of the real projective space on 16 vertices. An interesting feature of this complex is its strong symmetry: it is \emph{centrally symmetric} (i.e., there is a free involution acting) and all the vertex links are isomorphic to the $2$-sphere in \Cref{links}. Since the projective space is homeomorphic to the \emph{lens space} $L(2,1)$, a particular case of the following result of Zheng shows that our triangulation is vertex minimal.
\begin{proposition}\cite[Proposition 4.3]{Zh17}\label{lens_at_least_16}
	Any balanced triangulation of the lens space $L(p,q)$, with $p>1$, has at least 16 vertices.
\end{proposition}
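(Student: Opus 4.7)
The plan is to derive the bound in two stages: first establish $f_0(\Delta)\geq 12$ from the Klee--Novik color-class inequality, then refine to $f_0(\Delta)\geq 16$ by exploiting the non-trivial first homology of lens spaces.

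For the baseline, I would apply the Klee--Novik result (the one already invoked in the proof of \Cref{rp2}): any balanced triangulation of a closed homology $d$-manifold that is not a homology $d$-sphere has at least three vertices in every color class. Since $H_1(L(p,q);\ZZ)\cong \ZZ/p\ZZ\neq 0$ for $p>1$, the lens space $L(p,q)$ is not a homology $3$-sphere, and so each of the four color classes of a balanced triangulation $\Delta$ of $L(p,q)$ contains at least $3$ vertices, giving $f_0(\Delta)\geq 12$ immediately.

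The core of the argument is to rule out any color class of size exactly $3$. Assume for contradiction that $\kappa^{-1}(1)=\{v_1,v_2,v_3\}$. Because every facet of a balanced $3$-manifold contains exactly one vertex per color, the three stars cover $\Delta=\st_\Delta(v_1)\cup\st_\Delta(v_2)\cup\st_\Delta(v_3)$, and each $\st_\Delta(v_i)$ is a $3$-ball (a cone over the $2$-sphere $\lk_\Delta(v_i)$). Moreover, every triangle of colors $\{2,3,4\}$ lies in exactly two facets of $\Delta$ by the manifold condition, hence in exactly two of the three link spheres $\lk_\Delta(v_i)$, so the three $2$-spheres are identified to each other in a controlled pattern. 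I would then run a van Kampen / Mayer--Vietoris analysis of this tri-ball cover, aiming to show that $\pi_1(\Delta)$ must be either trivial or infinite, and in either case incompatible with $\pi_1(L(p,q))=\ZZ/p\ZZ$. Once this contradiction is in place, every color class has at least $4$ vertices, and $f_0(\Delta)\geq 4\cdot 4 = 16$ follows.

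The principal obstacle is the fundamental-group analysis of the tri-ball decomposition: each pairwise intersection $\st_\Delta(v_i)\cap\st_\Delta(v_j)$ is a $2$-dimensional subcomplex sitting on the boundary spheres, and controlling how these intersections fit together under the combinatorial identifications is delicate. An alternative route, likely closer to Zheng's original argument, is to combine lower-bound / rigidity inequalities for balanced $3$-manifolds (a balanced analog of the Kalai--Novik--Swartz lower bound theorem) with the constraint that $L(p,q)$ is not simply connected, which strengthens the relevant $h$-vector inequality enough to push $f_0$ past $15$. Either route should be expected to involve a nontrivial case analysis; the appeal to the Klee--Novik bound handles the bulk of the estimate cheaply, but squeezing out the final vertex in each class genuinely uses that $L(p,q)$ is not a sphere.
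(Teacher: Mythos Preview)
The paper does not supply a proof of this proposition: it is stated with a citation to \cite[Proposition 4.3]{Zh17} and used as a black box to certify that $\Delta^{\mathbb{R}\mathbf{P}^{3}}_{16}$ is vertex minimal. There is therefore no in-paper argument to compare your proposal against.

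That said, a few comments on the proposal itself. Your first stage is solid and matches exactly how the paper uses the Klee--Novik color-class bound elsewhere (cf.\ \Cref{rp2}): $H_1(L(p,q);\ZZ)\cong\ZZ/p\ZZ$ with $p>1$ rules out a homology sphere, so each of the four color classes has at least three vertices and $f_0\geq 12$. The second stage, however, is only an outline. You correctly identify the structural consequence of a size-$3$ color class (three $3$-balls whose boundary $2$-spheres pairwise share the $\{2,3,4\}$-colored triangles, two at a time), but the claim that a van Kampen/Mayer--Vietoris computation forces $\pi_1$ to be trivial or infinite is asserted, not proved. The pairwise intersections $\st_\Delta(v_i)\cap\st_\Delta(v_j)=\lk_\Delta(v_i)\cap\lk_\Delta(v_j)$ need not be connected or simply connected, and without control over their topology the Seifert--van Kampen statement does not obviously collapse to the dichotomy you want. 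Your ``alternative route'' via balanced lower-bound/rigidity inequalities is likewise only gestured at. So as written the proposal has a genuine gap at precisely the step that carries all the content beyond $f_0\geq 12$; to close it you would need either a careful case analysis of how three $2$-spheres can share their triangles two-at-a-time inside a closed $3$-manifold, or to actually invoke the specific inequality from \cite{Zh17} that does this work.
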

\begin{table}[h]
	\resizebox{\textwidth}{!}{
\begin{tabular}{l l l l l l l l}
	\hline\hline
	$[0,2,3,7]$	&	$[0,4,6,14]$	&	$[1,2,5,14]$	&	$[1,5,10,15]$	&	$[2,3,7,11]$	&	$[2,11,12,13]$	&	$[3,7,9,11]$	&	$[6,8,9,15]$\\
	$[0,2,3,12]$	&	$[0,4,6,15]$	&	$[1,2,5,15]$	&	$[1,6,10,13]$	&	$[2,3,11,12]$	&	$[3,4,5,8]$	&	$[3,10,11,12]$	&	$[6,8,10,15]$\\
	$[0,2,7,15]$	&	$[0,4,7,15]$	&	$[1,2,7,14]$	&	$[1,6,10,15]$	&	$[2,5,8,13]$	&	$[3,4,7,8]$	&	$[4,5,8,13]$	&	$[6,9,11,14]$\\
	$[0,2,12,15]$	&	$[0,5,10,14]$	&	$[1,2,7,15]$	&	$[1,7,9,13]$	&	$[2,5,8,15]$	&	$[3,5,8,10]$	&	$[4,5,11,13]$	&	$[6,10,11,13]$\\
	$[0,3,4,5]$	&	$[0,6,9,14]$	&	$[1,4,6,13]$	&	$[1,7,9,14]$	&	$[2,5,11,13]$	&	$[3,6,8,9]$	&	$[4,5,11,14]$	&	$[7,8,9,13]$\\
	$[0,3,4,7]$	&	$[0,6,9,15]$	&	$[1,4,6,15]$	&	$[1,9,12,13]$	&	$[2,5,11,14]$	&	$[3,6,8,10]$	&	$[4,6,11,13]$	&	$[7,9,11,14]$\\
	$[0,3,5,10]$	&	$[0,9,12,14]$	&	$[1,4,7,13]$	&	$[1,9,12,14]$	&	$[2,7,11,14]$	&	$[3,6,9,11]$	&	$[4,6,11,14]$	&	$[8,9,12,13]$\\
	$[0,3,10,12]$	&	$[0,9,12,15]$	&	$[1,4,7,15]$	&	$[1,10,12,13]$	&	$[2,8,12,13]$	&	$[3,6,10,11]$	&	$[4,7,8,13]$	&	$[8,9,12,15]$\\
	$[0,4,5,14]$	&	$[0,10,12,14]$	&	$[1,5,10,14]$	&	$[1,10,12,14]$	&	$[2,8,12,15]$	&	$[3,7,8,9]$	&	$[5,8,10,15]$	&	$[10,11,12,13]$\\
	\hline\hline
\end{tabular}}
	\caption{The list of facets of $\Delta^{\mathbb{R}\mathbf{P}^{3}}_{16}$.}
\end{table}


\begin{figure}[h]
	\centering
	\includegraphics[scale=0.8]{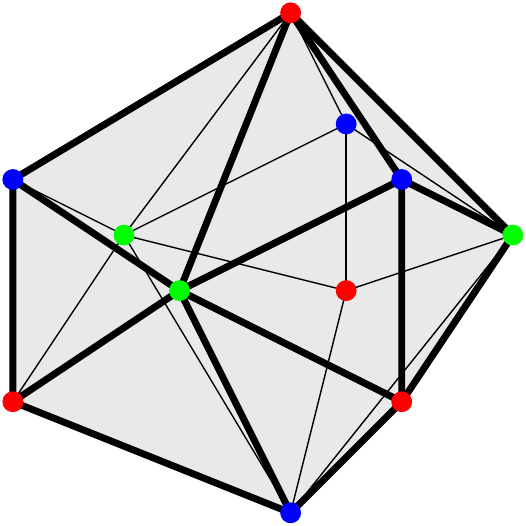}
	\caption{The (all isomorphic) vertex links of the triangulation $\Delta^{\mathbb{R}\mathbf{P}^{3}}_{16}$.}
	\label{links}
\end{figure} 
	
\subsection{Small balanced triangulations of $3$-manifolds}\label{dimension_3_sub}
In \Cref{tab:table2} we report the smallest known $f$-vectors of balanced triangulations of several $3$-manifolds. We point out that some of these triangulations were previously known, and they are referenced through this section. For instance Klee and Novik \cite{KN} proved the existence of a $d$-dimensional simplicial complex on $3d+3$ vertices which provides a vertex minimal balanced triangulation of $S^{d-1}\times S^1$ when $d$ is even, and of the twisted bundle $S^{d-1}\dtimes S^1$ when $d$ is odd. Moreover they construct balanced triangulations of both $S^{d-1}\times S^1$ and $S^{d-1}\dtimes S^1$ on $3d+5$ vertices. These constructions, combined with a result of Zheng (\cite{Zh16}), show that the second and third line in \Cref{tab:table2} correspond to vertex minimal balanced triangulations. As previously discussed, via \Cref{lens_at_least_16} we can conclude that also $\Delta^{\mathbb{R}\mathbf{P}^{3}}_{16}$ and a triangulation of the lens space $L(3,1)$ constructed in \cite{Zh17} are balanced vertex minimal. Most notably the mentioned triangulations of $S^{d-1}\times S^1$ ($d$ even), $S^{d-1}\dtimes S^1$ ($d$ odd) and of $L(3,1)$ are \emph{balanced neighborly}, that is they do not have bichromatic missing edges.\\
In the rest of the table we report the minimal balanced $f$-vectors achieved for different $3$-manifolds, such as several lens spaces $L(p,q)$, connected sums, two additional spherical 3-manifolds called the octahedral space and the cube space, and the Poincar\'{e} homology $3$-sphere. For a more extensive treatment of this topic we refer to \cite{LuThesis}. A classical theorem in topology by Edwards and Cannon states that the $k$-fold suspension of any homology $d$-sphere is homeomorphic to $S^{d+k}$, even though it is not a combinatorial sphere. Since balancedness is preserved by taking suspension we obtain a family of non-combinatorial balanced triangulations of $S^{d}$, for $d\geq 5$. For $d=5$ we report the smallest $f$-vector known for a non-combinatorial balanced sphere.
\begin{corollary}
	There exists a balanced non-combinatorial $5$-sphere with $f$-vector $(1,30,288,1132,2106,1848,616)$. Moreover by taking further suspensions we obtain balanced non-combinatorial $d$-spheres on $2d+20$ vertices, for every $d\geq 5$.
\end{corollary}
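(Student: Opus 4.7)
The plan is short: take the balanced triangulation $P$ of the Poincar\'{e} homology $3$-sphere whose $f$-vector $(1,26,180,308,154)$ is recorded in \Cref{tab:table2}, and form its double suspension $\Sigma^{2}P$. By the Edwards--Cannon theorem cited immediately above the corollary, $|\Sigma^{2}P|$ is homeomorphic to $S^{5}$. Balancedness is preserved by suspension, since the two new apex vertices of each $\Sigma$ lie in no common face (the subset $\{a,b\}$ is not a face of $S^{0}$) and can share a brand-new color class; so $\Sigma^{2}P$ is balanced on $4+2=6$ color classes. To rule out combinatoriality I would examine vertex links: in $\Sigma^{2}P$ the link of either apex of the first suspension equals $\Sigma P$, which itself has a vertex (the inner apex) whose link is $P$. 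Since $P$ is a homology $3$-sphere with nontrivial fundamental group, it is not PL-homeomorphic to $S^{3}$, hence not a combinatorial $3$-sphere. Consequently $\Sigma P$ is not a combinatorial $4$-manifold, and $\Sigma^{2}P$ is not a combinatorial $5$-manifold, in particular not a combinatorial $5$-sphere.

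For the $f$-vector, I would apply the suspension identity $f_{i}(\Sigma\Delta)=f_{i}(\Delta)+2f_{i-1}(\Delta)$ (with the convention $f_{-1}=1$) twice, obtaining
$$f_{i}(\Sigma^{2}P)=f_{i}(P)+4f_{i-1}(P)+4f_{i-2}(P).$$
Plugging in $(1,26,180,308,154)$ one checks mechanically that $f(\Sigma^{2}P)=(1,30,288,1132,2106,1848,616)$, matching the claim.

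For the second assertion, iterate: $\Sigma^{d-3}P$ is balanced, has $f_{0}=26+2(d-3)=2d+20$ vertices, and its realization is homeomorphic to $S^{d}$ because topological suspension sends spheres to spheres (and $|\Sigma^{2}P|\cong S^{5}$ by Edwards--Cannon). The same vertex-link argument propagates: the link of an outermost apex of $\Sigma^{d-3}P$ is $\Sigma^{d-4}P$, whose deepest vertex-link is still $P$, so $\Sigma^{d-3}P$ is not a combinatorial $d$-sphere for any $d\geq 5$.

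The only nontrivial input is the existence of the balanced $26$-vertex triangulation of the Poincar\'{e} sphere, which is supplied by the computer search recorded in \Cref{tab:table2}; everything else is a routine combination of the Edwards--Cannon theorem with the standard fact that a single non-spherical vertex link obstructs the combinatorial manifold property. The main conceptual obstacle is simply verifying that non-combinatoriality (as opposed to mere failure of the PL homeomorphism to $\partial\Delta_{d+1}$) is already forced at the vertex-link level, which is what the calculation above ensures.
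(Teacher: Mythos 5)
Your proposal is correct and follows exactly the route the paper intends: double-suspend the balanced $26$-vertex triangulation of the Poincar\'{e} homology $3$-sphere recorded in the table, invoke Edwards--Cannon for the topological type, use the preservation of balancedness under suspension, and detect non-combinatoriality from the vertex link of a suspension apex. The $f$-vector computation via $f_i(\Sigma^2 P)=f_i(P)+4f_{i-1}(P)+4f_{i-2}(P)$ and the vertex count $26+2(d-3)=2d+20$ both check out.
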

\begin{remark}
	As it was pointed out in \cite{BjLu} there exists a procedure introduced by Datta to construct the suspension by increasing the number of vertices only by one. Unfortunately this one point suspension does not preserve balancedness.
\end{remark}
	\noindent
	The lists of facets of all the triangulations appearing in \Cref{tab:table2} can be found in \cite{GitRepo}.
\subsection{The connected sum of $S^2$ bundles over $S^1$ and the balanced Walkup class}\label{conjecture_section}
    The lower bound theorem for manifolds (actually true for normal pseudomanifolds, see \cite{MUR}) gives a bound for the number of edges of a triangulation $\Delta$ of an $\FF$-homology manifold with a certain number of vertices, depending of $\widetilde{\beta}_1(\Delta;\FF)$. It is an interesting refinement of the lower bound theorem for homology spheres, obtained from the study of algebraic invariants of Buchsbaum graded rings. Juhnke-Kubitzke, Murai, Novik and Sawaske proved a balanced analog of this bound (see \cite{Juhnke:Murai:Novik:Sawaske}), and established a conjecture of Klee and Novik \cite[Conjecture 4.14]{KN} for the characterization of the case of equality, when the dimension is greater or equal to $4$. Let $\Delta$ and $\Gamma$ be pure balanced simplicial complexes of the same dimension on disjoint vertex sets, let $F,G$ be two facets of $\Delta$ and $\Gamma$ respectively and let $\varphi:F\longrightarrow G$ be a bijection. Then the \emph{connected sum} $\Delta\#\Gamma$ is the simplicial complex obtained from $\Delta\setminus F$ and $\Gamma\setminus G$ identifying $v$ and $\varphi(v)$, for every $v\in F$. Let $\Delta$ be a balanced simplicial complex, $F,G$ two facets of $\Delta$ and $\varphi:F\longrightarrow G$ a bijection such that $\lk_{\Delta}(v)\cap\lk_{\Delta}(\varphi(v))=\left\lbrace \emptyset\right\rbrace $ and $\kappa(v)=\kappa(\varphi(v))$, for all $v\in F$. We say that the simplicial complex obtained from $\Delta$ removing $F$ and $G$ and identifying $v$ with $\varphi(v)$ is a \emph{balanced handle addition}. Note that this operations preserves balancedness, as well as the property of being an homology manifold. We define the \emph{balanced Walkup class} $\mathcal{BH}_d$ as the set of all balanced simplicial complexes obtained from $\partial\mathcal{C}_{d+1}$ by successively applying the operations of connected
    sums with $\partial\mathcal{C}_{d+1}$ and balanced handle additions. In particular the set of balanced spheres on $n$ vertices obtained via connected sums of $\frac{n}{d}-1$ copies of $\partial\mathcal{C}_{d+1}$, called \emph{cross-polytopal stacked spheres}, is a subset of the balanced Walkup class.   \begin{theorem}\cite{Juhnke:Murai:Novik:Sawaske}\label{GLBT}
	Let $\Delta$ be a connected $d$-dimensional balanced $\FF$-homology manifold, with $d\geq 3$. Then
	\begin{equation}\label{eqn_GLBT}
	2f_1(\Delta)-3df_0(\Delta)\geq 4\binom{d+1}{2}(\widetilde{\beta}_1(\Delta;\FF)-1).
	\end{equation}
	Moreover if $d\geq 4$ equality holds if and only if $\Delta$ is in the balanced Walkup class.
\end{theorem}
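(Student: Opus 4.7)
The plan is to attack the inequality via the algebra of the colored Stanley--Reisner ring, and to attack the equality characterization by an induction on $f_0(\Delta)$ that reverses the two generating operations of $\mathcal{BH}_d$. Since $\Delta$ is balanced, one can choose a \emph{colored} linear system of parameters $\theta_1,\dots,\theta_{d+1}$ for $\FF[\Delta]$, where $\theta_j$ is a generic $\FF$-linear combination of the variables of color $j$. As $\Delta$ is an $\FF$-homology manifold it is Buchsbaum, so the Hilbert function of the Artinian quotient $A := \FF[\Delta]/(\theta_1,\dots,\theta_{d+1})$ is a Schenzel-type shift of the $h$-vector by the reduced Betti numbers of $\Delta$; in particular $\dim_\FF A_1 = h_1(\Delta) = f_0(\Delta)-(d+1)$ and $\dim_\FF A_2$ is determined by $f_1(\Delta)$, $f_0(\Delta)$, and $\widetilde{\beta}_1(\Delta;\FF)$.

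For the inequality itself I would combine a colored rigidity argument in the style of Kalai--Lee with the socle analysis used in the Buchsbaum lower bound theorem. Concretely, multiplication by a generic linear form (or, in the colored setting, an operator assembled from the colored parameters) gives a map $A_1 \to A_2$ whose kernel is controlled by topological data, and tracking dimensions yields a lower bound on $\dim_\FF A_2$ of the form $\tfrac{d}{2}\dim_\FF A_1 + \binom{d+1}{2}\widetilde{\beta}_1(\Delta;\FF)$. Translating back through the identities $h_1 = f_0-(d+1)$ and $h_2 = f_1-d\cdot f_0+\binom{d+1}{2}$ then produces exactly \eqref{eqn_GLBT}. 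The ``easy'' half of the equality characterization is a direct numerical check: a balanced handle addition shifts each side of \eqref{eqn_GLBT} by $+4\binom{d+1}{2}$, a connected sum with $\partial\mathcal{C}_{d+1}$ leaves both sides unchanged, and $\partial\mathcal{C}_{d+1}$ itself saturates the bound, so every element of $\mathcal{BH}_d$ attains equality.

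The harder direction is to show, for $d\geq 4$, that equality forces $\Delta\in\mathcal{BH}_d$. I would induct on $f_0(\Delta)$, with base case $\Delta=\partial\mathcal{C}_{d+1}$. When equality holds every step in the algebraic derivation above must be tight, which I would exploit to deduce strong \emph{local} constraints: each vertex link $\lk_\Delta(v)$ must attain equality in the $(d-1)$-dimensional version of the same bound, and so---by the inductive hypothesis applied within the link, available because $d-1\geq 3$---each link is itself in the balanced Walkup class of its dimension. Using this link information I would search for a combinatorial reduction: either a missing bichromatic $d$-face across which $\Delta$ splits as a connected sum $\Delta'\#\partial\mathcal{C}_{d+1}$, or a pair of facets with color-preserving identification and disjoint links across which $\Delta$ is realized as a balanced handle addition on some smaller equality-case complex $\Delta'$. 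Reversing the operation produces $\Delta'$ with $f_0(\Delta')<f_0(\Delta)$ still at equality, and the induction closes.

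The main obstacle is locating this reducing missing face or handle. The inequality only constrains the aggregate quantity $2f_1-3df_0$, so extracting a specific geometric decomposition from a single numerical equality requires carefully coupling the link structures given by the inductive hypothesis with the global manifold structure of $\Delta$; in particular, one must rule out ``rigid'' non-Walkup configurations whose vertex links happen to be cross-polytopal stacked but which do not themselves admit a global splitting. The dimension hypothesis $d\geq 4$ is essential precisely here, since the inductive hypothesis on $(d-1)$-dimensional links requires the equality characterization to hold one dimension lower, and this is false for $d-1=2$.
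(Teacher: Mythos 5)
The paper does not prove this theorem: it is quoted verbatim from Juhnke-Kubitzke, Murai, Novik and Sawaske with a citation and no argument, so there is no in-text proof to compare your attempt against. With that caveat, a few remarks on your sketch on its own terms.

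Your verification of the ``easy'' half is correct: a balanced handle addition does decrease $f_0$ by $d+1$ and $f_1$ by $\binom{d+1}{2}$ while raising $\widetilde{\beta}_1$ by one, so both sides of \eqref{eqn_GLBT} shift by $4\binom{d+1}{2}$; a connected sum with $\partial\mathcal{C}_{d+1}$ leaves $2f_1-3df_0$ and $\widetilde{\beta}_1$ unchanged; and $\partial\mathcal{C}_{d+1}$ itself saturates the bound. The overall algebraic framework you invoke for the inequality (colored l.s.o.p.\ for $\FF[\Delta]$, Buchsbaum theory, socle contributions from local cohomology giving the $h''$-vector) is the right landscape, but the crucial step --- establishing that a suitable map out of degree one has a sufficiently large image, which is what yields $dh''_1\leq 2h''_2$ --- is asserted rather than proved; this is precisely where the reference's technical work lies, and it is not a formality.

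The inductive structure you propose for the hard direction has a logical misalignment worth flagging. You invoke ``the inductive hypothesis applied within the link, available because $d-1\geq 3$,'' but vertex links of a $d$-dimensional homology manifold are $(d-1)$-dimensional homology \emph{spheres}, so their $\widetilde{\beta}_1$ vanishes and the statement you need is the sphere equality characterization (balanced cross-polytopal stacked spheres are the only balanced homology spheres with $2h_2=dh_1$), which is a separate, previously known result of Klee--Novik and Zheng, not the manifold statement you are proving by induction. In particular for $d=4$ one needs the sphere characterization in dimension three; this \emph{is} available, whereas the manifold case in dimension three is exactly the open \Cref{KLconjecture}, so your induction would not even get off the ground if it were truly quoting the theorem's own statement one dimension down. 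Finally, as you acknowledge, the step of upgrading the link-level information (all links cross-polytopal stacked) to a global splitting of $\Delta$ as a connected sum or handle addition is the genuine combinatorial content of the equality case, and your sketch leaves it entirely to future work; without it the argument is incomplete.
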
 
    \noindent
	\Cref{GLBT} leaves unsolved the case of equality when $d=3$, which is still part of Conjecture 4.14 in \cite{KN}.
\begin{conjecture}[\cite{KN}]\label{KLconjecture}
	Let $\Delta$ be a connected $3$-dimensional balanced $\FF$-homology manifold. Then $2f_1(\Delta)-9f_0(\Delta)= 24(\widetilde{\beta}_1(\Delta;\FF)-1)$ if and only if $\Delta$ is in the balanced Walkup class. 
\end{conjecture}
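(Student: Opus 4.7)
The plan is to prove the two directions of the biconditional separately. The sufficient direction (Walkup class implies equality) should go by induction on the number of operations used to build $\Delta$ from $\partial\mathcal{C}_{4}$. The base case $\partial\mathcal{C}_{4}$ has $(f_0,f_1,\widetilde{\beta}_1) = (8,24,0)$, so $2f_1-9f_0 = -24 = 24(\widetilde{\beta}_1-1)$. A connected sum with a fresh copy of $\partial\mathcal{C}_{4}$ identifies one facet: it adds $4$ vertices and $24-6 = 18$ edges while keeping $\widetilde{\beta}_1$ fixed, so both sides of the desired identity change by $0$. A balanced handle addition identifies two facets with disjoint stars via a color-preserving bijection: it removes $4$ vertices and $6$ edges and increases $\widetilde{\beta}_1$ by $1$, so both sides change by exactly $+24$. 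This establishes one direction without ever using $d=3$ specifically.

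For the necessary direction, I would mimic the strategy used for $d\geq 4$ in \cite{Juhnke:Murai:Novik:Sawaske}: pass to a generic balanced Artinian reduction of the Stanley-Reisner ring $\FF[\Delta]$ and study the dimension of the socle in a carefully chosen degree, which is what controls $2f_1-3df_0$ for a balanced $\FF$-homology manifold. Equality in \Cref{GLBT} should force this socle to attain its maximal possible dimension. In the higher-dimensional proof, this maximality forces the existence of either a bichromatic missing edge that splits $\Delta$ as a connected sum of two smaller balanced homology manifolds in the equality class, or a matching facet pair realizing the combinatorial data of a balanced handle addition. Induction on $f_0$ then finishes: extract the decomposition, apply the hypothesis on each smaller piece, and reassemble.

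The hard part is precisely the socle analysis at $d=3$, which is why this case resists the method of \cite{Juhnke:Murai:Novik:Sawaske}. For $d\geq 4$ the critical graded piece splits cleanly into vertex-supported components, each pinpointing a vertex whose link is a specific subcomplex of $\partial\mathcal{C}_{d}$; for $d=3$ the analogous decomposition degenerates and the rigidity needed to locate either a splitting missing edge or a handle-ready facet pair is missing. A plausible workaround is a hybrid approach: use the algebraic argument to extract a weaker combinatorial conclusion, say the existence of a vertex $v$ with $\lk_{\Delta}(v)\cong\partial\mathcal{C}_{3}$ having a specified colored neighborhood, then exploit the cross-flip implementation of this paper to verify computationally that, up to some vertex threshold, no counterexample exists in the equality class. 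Even so, closing the argument in full generality requires a new ingredient in the Buchsbaum structure of $\FF[\Delta]$ in dimension three, and this algebraic gap is where I expect the real obstacle to lie.
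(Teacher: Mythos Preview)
The statement you are attempting to prove is explicitly labeled a \emph{conjecture} in the paper, not a theorem. The paper does not prove it. Immediately before stating it, the paper writes that \Cref{GLBT} ``leaves unsolved the case of equality when $d=3$, which is still part of Conjecture 4.14 in \cite{KN}.'' What the paper \emph{does} do with this conjecture is provide supporting computational evidence: it exhibits balanced triangulations of $(S^2\times S^1)^{\#2}$ and $(S^2\dtimes S^1)^{\#2}$ with $f$-vector $(1,16,84,136,68)$, checks that they attain equality, and then verifies by an explicit construction that these particular complexes lie in $\mathcal{BH}_3$. There is no general argument offered.

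Your proposal is, in the end, consistent with this state of affairs: the sufficient direction you sketch (Walkup class implies equality) is indeed routine and well known, but for the necessary direction you yourself concede that the socle analysis from \cite{Juhnke:Murai:Novik:Sawaske} degenerates at $d=3$, that ``closing the argument in full generality requires a new ingredient,'' and that ``this algebraic gap is where I expect the real obstacle to lie.'' That is not a proof; it is an accurate diagnosis of why the conjecture is open. The hybrid workaround you float---extracting a weak combinatorial conclusion and then checking computationally up to some vertex threshold---cannot yield a proof either, since no finite check bounds $f_0$. So there is no gap to repair in the sense of a fixable error: the hard direction is genuinely unresolved, and neither your proposal nor the paper claims otherwise.
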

	\noindent
	Using our computer program we found two balanced triangulation of $(S^2\times S^1)^{\#2}$ and $(S^2\dtimes S^1)^{\#2}$ respectively with $f$-vector $(1,16,84,136,68)$. Since $\widetilde{\beta}_1((S^2\times S^1)^{\#2};\FF)=\widetilde{\beta}_1((S^2\dtimes S^1)^{\#2};\FF)=2$, it is easy to see that both triangulations attain equality in \Cref{eqn_GLBT}. In light of \Cref{KLconjecture} it is natural to ask if these two simplicial complexes belong to the balanced Walkup class. We answer positively by giving an explicit decomposition. In what follows we denote with $\partial\mathcal{C}_{4}(v_1,v_2,v_3,v_4,w_1,w_2,w_3,w_4)$ the boundary of the cross-polytope on the vertex set $\left\lbrace v_1,v_2,v_3,v_4,w_1,w_2,w_3,w_4\right\rbrace$, such that $\left\lbrace v_i,w_i\right\rbrace$ is not an edge for any $i=1,\dots,4$.
	\begin{itemize}
		\item $(S^2\dtimes S^1)^{\#2}$:
		\begin{itemize}
			\item As it was pointed out in the first paragraph of \Cref{conjecture_section} there is a balanced simplicial complex $\Delta^{S^2\dtimes S^1}_{12}$ on $12$ vertices which triangulates $S^2\dtimes S^1$. It can be obtained from three copies of $\partial\mathcal{C}_{4}$, namely $\partial\mathcal{C}_{4}(x_1,\dots,x_4,y'_1,\dots,y'_4)$, $\partial\mathcal{C}_{4}(y_1,\dots,y_4,z'_1,\dots,z'_4)$ and $\partial\mathcal{C}_{4}(z_1,\dots,z_4,x'_1,\dots,x'_4)$, via two connected sums and one handle addition, identifying the vertices $v_i$ and $v'_i$ (see \cite{KN}). Clearly it belongs to  in $\mathcal{BH}_3$. 
			\item For any facet $F=\left\lbrace r_1,r_2,r_3,r_4\right\rbrace$ of $\Delta^{S^2\dtimes S^1}_{12}$ we can take the connected sum with $\partial\mathcal{C}_{4}(s_1,\dots,s_4,\linebreak r'_1,\dots,r'_4)$ and $\partial\mathcal{C}_{4}(t_1,\dots,t_4,s'_1,\dots,s'_4)$, again identifying $r_i$, with $r'_i$ and $s_i$ with $s'_i$.
			\item Finally we can choose any other facet $G=\left\lbrace u_1,u_2,u_3,u_4\right\rbrace$ of $\Delta^{S^2\dtimes S^1}_{12}$ such that $F\cap G=\emptyset$ and such that the distance from $F$ and $G$ (measured on the dual graph) is even, and perform balanced handle addition identifying the vertices $t_i$ with $u_i$.
			Since the vertices in the link of $t_i$ are a subset of $\left\lbrace s_1,s_2,s_3,s_4 \right\rbrace$ and $F\cap G=\emptyset$, we conclude that the links of $t_i$ and $u_i$ do not intersect, and hence the handle addition is well defined. 
			Note that the last choice can produce non-isomorphic triangulations of $(S^2\dtimes S^1)^{\#2}$, all of which sit inside $\mathcal{BH}_3$. 
		\end{itemize} 
	    \item $(S^2\times S^1)^{\#2}$:
	    \begin{itemize}
	    	\item With a similar construction as in the previous case we can triangulate the orientable bundle $S^2\times S^1$ with four copies of $\partial\mathcal{C}_{4}$, namely $\partial\mathcal{C}_{4}(x_1,\dots,x_4,y'_1,\dots,y'_4)$, $\partial\mathcal{C}_{4}(y_1,\dots,y_4,z_1,\dots,z_4)$, $\partial\mathcal{C}_{4}(w_1,\dots,w_4,x''_1,y''_2,y''_3,y''_4)$ and $\partial\mathcal{C}_{4}(w'_1,\dots,w'_4,y''_1,z_2,z_3,z_4)$. Here we perform three connected sums and a balanced handle addition identifying vertices $v_i$, $v'_i$ and $v''_i$. We denote this simplicial complex on $16$ vertices with $\Delta^{S^2\times S^1}_{16}$ (note that it is not the vertex minimal balanced triangulation of $S^2\times S^1$).
	    	\item We now pick the facets $F=\left\lbrace x_1,x_2,x_3,x_4 \right\rbrace$ and $G=\left\lbrace w_1,z_2,z_3,z_4\right\rbrace$ of $\Delta^{S^2\times S^1}_{16}$, and we observe that the link of any vertex in $F$ (respectively $G$) does not contain any vertex in $G$ (respectively $F$). Moreover $F$ and $G$ have an even distance (with respect to the dual graph of $\Delta^{S^2\times S^1}_{16}$).
	    	\item Finally we perform a connected sum and subsequent handle addition using $\partial\mathcal{C}_{4}(x'''_1,\dots,x'''_4,\linebreak w'''_1,z'''_2,z'''_3,z'''_4)$, where the identifications are between $v_i$ and $v'''_i$.
	    \end{itemize}
	\end{itemize}
\begin{remark}
	The description of the construction for $(S^2\dtimes S^1)^{\#2}$ mostly relies on the simplicial complex $\Delta^{S^2\dtimes S^1}_{12}$. Since in \cite{KN} the authors showed that the analog construction in dimension $d$ provides a triangulation on $3d$ vertices of $S^{d-1}\dtimes S^1$ if $d$ is even and of $S^{d-1}\times S^1$ if $d$ is odd, the extra handle that we add provides triangulations on $4d$ vertices of $(S^{d-1}\dtimes S^1)^{\#2}$ if $d$ is odd and of $(S^{d-1}\times S^1)^{\#2}$ if $d$ is even.
\end{remark}	

\begin{table}[h!]

	\[\begin{array}{l|l|l|l|l}
	\left| \Delta\right|  & \text{Min }f(\Delta) &  f(\text{Bd}(\Delta)) & \text{Min. Bal. $f$ known} & \text{Notes} \\\hline
	S^3 & (1,5,10,10,5) & (1,30, 150, 240, 120) & (1,8,24,32,16)^* & \partial\mathcal{C}_{4}\\
	S^2\times S^1 & (1,10,42,64,32) & (1,148,916,1536,768) & (1,14,64,100,50)^* & \text{see \cite{KN}}\\
	S^2\dtimes S^1 & (1,9,36,54,27) & (1,126,774,1296,648) & (1,12,54,84,42)^* & \text{see \cite{KN}}\\
	\mathbb{R}\mathbf{P}^{3} & (1,11,51,80,40) & (1, 182, 1142, 1920, 960) & (1, 16, 88, 144, 72)^* & \Delta^{\mathbb{R}\mathbf{P}^{3}}_{16}\\
	L(3,1) & (1,12,66,108,54) & (1, 240, 1536, 2592, 1296) & (1, 16, 96, 160, 80)^* & \text{see \cite{Zh17}}\\
	L(4,1) & (1,14,84,140,70) & (1, 308, 1988, 3360, 1680) & (1, 20, 132, 224, 112) &\\
	L(5,1) & (1,15,97,164,82) & (1, 358, 2326, 3936, 1968) & (1, 22, 152, 260, 130) &\\
	L(5,2) & (1,14,86,144,72) & (1, 316, 2044, 3456, 1728) & (1,20,132,224,112)&\\
	L(6,1) & (1,16,110,188,94) & (1,408,2664,4512,2256) & (1,24,176,304,152) & \\
	(S^2\times S^1)^{\#2} & (1,12,58,92,46) & (1, 208, 1312, 2208, 1104) & (1,16,84,136,68)& \text{see \Cref{conjecture_section}}\\
	(S^2\dtimes S^1)^{\#2} & (1,12,58,92,46) & (1, 208, 1312, 2208, 1104) & (1,16,84,136,68)& \text{see \Cref{conjecture_section}}\\
	(S^2\times S^1)\#\mathbb{R}\mathbf{P}^{3} & (1,14,73,118,59) & (1, 264, 1680, 2832, 1416) & (1,20,118,196,98) & \\
	(\mathbb{R}\mathbf{P}^{3})^{\#2} & (1,15,86,142,71) & (1,314,2018,3408,1704) & (1,21,137,232,116) & \\
	(S^2\times S^1)^{\#3} & (1,13,72,118,59) & (1,262,1678,2832,1416) & (1,20,118,196,98)& \\
	(S^2\dtimes S^1)^{\#3} & (1,13,72,118,59) & (1,262,1678,2832,1416) & (1,19,111,184,92)& \\
	S^1\times S^1\times S^1 & (1,15,105,180,90) & (1,390,2550,4320,2160) & (1,24,168,288,144)& \\
	\text{Oct. space} & (1,15,102,174,87) & (1,378,2466,4176,2088) & (1,24,168,288,144)& \\
	\text{Cube space} & (1,15,90,150,75) & (1,330,2130,3600,1800) & (1,23,157,268,134)& \\
	\text{Poincar\'{e}} & (1,16,106,180,90) & (1, 392, 2552, 4320, 2160) & (1, 26, 180, 308, 154) &\\
	\mathbb{R}\mathbf{P}^{2}\times S^1 & (1,14,84,140,70) & (1, 308, 1988, 3360, 1680) & (1, 24, 156, 264, 132) &\\
	\text{Triple-trefoil} & (1,18,143,250,125) & (1,536,3536,6000,3000) & (1, 28, 204, 352, 176) & \Delta^{3T}_{28}\\
	\text{Double-trefoil} & (1,16,108,184,92) & (1, 400, 2608, 4416, 2208) & (1, 22, 136, 228, 114) & \Delta^{2T}_{22}\\
	\end{array}
	\]
	\caption{A table reporting some small $f$-vectors of balanced $3$-manifolds.  The symbol "*" indicates that the corresponding value of $f_0$ is the minimum number of vertices of a balanced triangulation of $|\Delta|$.}
	\label{tab:table2}
\end{table}
\subsection{Non-vertex decomposable and non-shellable balanced $3$-spheres.}\label{section_decomposition}
In this paragraph we exhibit two interesting balanced triangulations of the $3$-sphere, namely one that is shellable but not vertex decomposable and a second one which is not constructible, and hence not shellable. We start with some definitions.
\begin{definition}
	A pure $d$-dimensional simplicial complex $\Delta$ is \emph{vertex decomposable} if it is the $d$-simplex or there exists a vertex $v$ such that $\lk_{\Delta}(v)$ and $\Delta\setminus v:=\left\lbrace F\in\Delta: v\notin F \right\rbrace $ are vertex decomposable. 
\end{definition}
\begin{definition}
	A pure $d$-dimensional simplicial complex is \emph{shellable} if there exists an ordering $F_1,\dots,F_m$ of its facets such that the complex $\left\langle F_1,\dots,F_{i-1}\right\rangle \cap\left\langle F_i\right\rangle$ is pure and $(d-1)$-dimensional for every $1\leq i \leq m$. Such an ordering is called \emph{shelling order}.
\end{definition}
\begin{definition}
	A pure $d$-dimensional simplicial complex $\Delta$ is \emph{constructible} if $\Delta\cong2^{[d+1]}$ or $\Delta=\Delta_1\cup\Delta_2$, where $\Delta_1$, $\Delta_2$ and $\Delta_1\cap\Delta_2$ are constructible. 
\end{definition}
\noindent
It is well known (see e.g. \cite{Bj95}) that these three families of simplicial complexes are related by the following hierarchy:
$$\left\lbrace \text{vertex decomposable}\right\rbrace \subseteq \left\lbrace \text{shellable} \right\rbrace \subseteq\left\lbrace \text{constructible} \right\rbrace.$$
In particular while there exist shellable $3$-spheres which are not vertex decomposable, the existence of constructible, but not shellable $3$-spheres is still open. In order to obtain interesting, possibly small balanced triangulations we again start from the barycentric subdivision of two distinct triangulations of the $3$-sphere with a sufficiently complicated knot embedded in their \emph{skeleton} (i.e., the subcomplex of all faces of dimension at most 1). For instance we turn our attention to the connected sum of $2$ or $3$ \emph{trefoil knots}, called a \emph{double-trefoil} and a \emph{triple-trefoil}. The reason for this choice is that in general the barycentric subdivision might turn non-shellable simplicial complexes into shellable ones, while complicated knots are obstructions to shellability even after the subdivision. We employ the following rephrasing of results by Ehrenborg and Hachimori (\cite{EhHa}), and Hachimori and Ziegler (\cite{HaZi}).
\begin{theorem}\label{knot_BZ}
	Let $\Delta$ be a triangulation of a $3$-sphere.
	\begin{itemize}
		\item (\cite{HaZi}) If the skeleton of $\Delta$ contains a double-trefoil knot on 6 edges then $\Delta$ is not vertex decomposable.
		\item (\cite{EhHa}) If the skeleton of $\Delta$ contains a triple-trefoil knot on 6 edges then $\Delta$ is not constructible (hence not shellable).
	\end{itemize}
\end{theorem}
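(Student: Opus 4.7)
My plan is to invoke the two cited knot-theoretic obstructions directly; the theorem simply packages them as a single statement, so the proof is essentially a translation of the original formulations into this language. The common machinery is the bridge number $b(K)$ of a knot $K \subset S^3$, which satisfies Schubert's additivity $b(K_1 \# K_2) = b(K_1) + b(K_2) - 1$. Since the trefoil has $b=2$, the double-trefoil has bridge number $3$ and the triple-trefoil has bridge number $4$.

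For the first assertion, Hachimori and Ziegler \cite{HaZi} proved that if $\Delta$ is a vertex decomposable triangulation of $S^3$ and $K$ is a knot sitting in the $1$-skeleton of $\Delta$ on $e$ edges, then $e$ admits a lower bound in terms of $b(K)$ that rules out $b(K)=3$ when $e=6$. Applying this to the hypothesis of the theorem immediately gives that $\Delta$ cannot be vertex decomposable. The argument in \cite{HaZi} proceeds by induction on the number of vertices using the recursive definition of vertex decomposability: when one deletes a shedding vertex $v$, the knot $K$ inside $\lk_{\Delta}(v) \cup (\Delta\setminus v)$ persists (possibly through a shortened edge-path), and the inductive base $\partial\Delta_4$ contains no nontrivial knot.

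For the second assertion, Ehrenborg and Hachimori \cite{EhHa} established the analogous inequality for constructible $3$-spheres. Their bound is weaker than that of Hachimori--Ziegler (reflecting that constructibility is a weaker property than vertex decomposability), but still prohibits a triple-trefoil on $6$ edges since $b(K)=4$. The argument again reduces to an inductive analysis, this time using the decomposition $\Delta=\Delta_1\cup\Delta_2$ furnished by constructibility instead of vertex deletion; the key step tracks how the knot is partitioned between the two constructible pieces and controls its bridge number via the intersection $\Delta_1\cap\Delta_2$, which is itself a constructible $2$-ball.

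The hard part, if one wished to reprove these bounds from scratch rather than quote them, would be executing the inductive step in the constructible setting, where the interplay between the knot and the decomposition is subtler than in the vertex-decomposable one: in the latter the analysis localizes cleanly around a single vertex link, whereas for constructibility the two halves can meet along an arbitrary shellable $2$-ball and one must argue uniformly over all possible such interfaces. Once those inductive bounds are in hand, the theorem as stated is just the instantiation at $e=6$ for the two specific knots.
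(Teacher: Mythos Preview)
The paper does not prove this theorem at all: it is stated as a ``rephrasing'' of results from \cite{HaZi} and \cite{EhHa} and is used as a black box to certify that the triangulations $\Delta^{2T}_{22}$ and $\Delta^{3T}_{28}$ have the desired properties. So there is no proof in the paper to compare against.

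Your sketch goes further than the paper and correctly identifies the mechanism behind the cited results: the bridge number together with Schubert's additivity, combined with an induction along the recursive definitions (shedding vertex for vertex decomposability, splitting $\Delta=\Delta_1\cup\Delta_2$ for constructibility). That is indeed how \cite{HaZi} and \cite{EhHa} proceed. One small caution: you phrase the Hachimori--Ziegler bound vaguely (``admits a lower bound \ldots\ that rules out $b(K)=3$ when $e=6$''); if you intend this as an actual proof rather than a pointer, you should state the precise inequality from \cite{HaZi}, since the strength of the conclusion (not vertex decomposable versus not constructible) depends delicately on which bound you have. As a citation-level justification, however, what you wrote is entirely adequate and in fact more informative than what the paper provides.
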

\noindent
For an introduction to knot theory and a rigorous definition of complicatedness of knots we defer to a work of Benedetti and Lutz (\cite{BeLu}), where triangulations of the $3$-sphere containing the double and triple-trefoil knot on 3 edges were constructed: the first has 16 vertices (see $S_{16,92}$ in \cite{BeLu}), while the second has 18 vertices ($S_{18,125}$). Using our computer program we take the barycentric subdivision of these two complexes and we reduce them only applying cross-flips preserving the subdivision of the knots, which consist of 6 vertices. More precisely we only allow flips of the form $\Delta\longmapsto\chi_{\Phi}(\Delta)$, where the interior of $\Phi$ does not contain any of the 6 edges of the knot. \Cref{knot_BZ} guarantees that in this way the obstructions for vertex decomposability and shellability are preserved, which yields the following result.
\begin{proposition}
	There exist balanced triangulations $\Delta^{2T}_{22}$ and $\Delta^{3T}_{28}$ of the $3$-sphere that are:
	\begin{itemize}
		\item Shellable but not vertex decomposable ($\Delta^{2T}_{22}$), and $f(\Delta^{2T}_{22})=(1,22,136,228,114)$.
		\item Non constructible, hence not shellable ($\Delta^{3T}_{28}$),and $f(\Delta^{3T}_{28})=(1,28,204,352,176)$.
	\end{itemize}
\end{proposition}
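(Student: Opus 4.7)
The plan is to produce the two triangulations by a controlled reduction of barycentric subdivisions of known knotted $3$-spheres, and then verify the advertised combinatorial properties via \Cref{knot_BZ}. First, I would take as input the triangulations $S_{16,92}$ and $S_{18,125}$ of Benedetti–Lutz, whose $1$-skeletons contain, respectively, a double-trefoil and a triple-trefoil knot realized as a cycle on three edges. Passing to the barycentric subdivisions $\mathrm{Bd}(S_{16,92})$ and $\mathrm{Bd}(S_{18,125})$ turns both complexes into balanced combinatorial $3$-spheres, and each of the three knot edges is replaced by a path of two edges, so the knots now sit in the balanced $1$-skeleton on exactly $6$ edges each. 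These are the starting complexes for the reduction, and by construction they satisfy the hypotheses of the two bullets of \Cref{knot_BZ}.

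Next, I would run the cross-flip implementation of Section~3, but with the restriction that the six distinguished edges of the knot are never allowed to appear in the interior of the flipping region: concretely, for each candidate induced subcomplex $\Phi$ of a balanced $3$-ball type from $\partial\mathcal{C}_{4}$ I would check that no knot edge lies in $\Phi\setminus\partial\Phi$, and only then admit the flip $\Delta\mapsto\chi_{\Phi}(\Delta)$. Since cross-flips do not alter the topology and are the identity outside $\Phi$, each admissible flip preserves the embedded $6$-edge subdivided trefoil (respectively triple-trefoil) inside the $1$-skeleton, and therefore preserves the corresponding obstruction from \Cref{knot_BZ}. Iterating admissible down-flips (interspersed with random up-flips to escape local minima, as explained in \Cref{rem: many irreducible}), I would continue until the reduction stabilizes at balanced triangulations with the claimed $f$-vectors $(1,22,136,228,114)$ and $(1,28,204,352,176)$; the explicit facet lists obtained would be recorded in \cite{GitRepo}.

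Given these two complexes $\Delta^{2T}_{22}$ and $\Delta^{3T}_{28}$, the negative statements are immediate from \Cref{knot_BZ}: the $6$-edge double-trefoil in $\Delta^{2T}_{22}$ forbids vertex decomposability, and the $6$-edge triple-trefoil in $\Delta^{3T}_{28}$ forbids constructibility and hence shellability. For the positive shellability of $\Delta^{2T}_{22}$ I would search with the same Sage framework for an explicit shelling order of its $114$ facets, for instance by a greedy backtracking procedure that, at step $i$, tests each not-yet-used facet $F$ against the shellability condition that $\bigl(\bigcup_{j<i}\langle F_j\rangle\bigr)\cap\langle F\rangle$ is pure of dimension $2$; once such an ordering is found, it can be certified directly from the facet list. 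The output would be included in \cite{GitRepo} as a witness.

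The main obstacle is twofold. First, the constrained cross-flip search has a substantially reduced move set, so there is no a priori reason it should reach the announced small $f$-vectors; this is purely a heuristic/experimental step, and the proof relies on exhibiting the concrete output. Second, shellability of $\Delta^{2T}_{22}$ must be demonstrated rather than deduced, and the combination ``small, shellable, but knot-forbidden from being vertex decomposable'' is tight: one must both keep the $6$-edge knot intact during reduction and then produce an actual shelling of the final complex. Both steps are computer-assisted, and the proposition is essentially a correctness statement about the outputs of those two computations, with \Cref{knot_BZ} supplying the topological obstructions.
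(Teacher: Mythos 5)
Your proposal follows essentially the same route as the paper: start from $\mathrm{Bd}(S_{16,92})$ and $\mathrm{Bd}(S_{18,125})$, reduce via cross-flips constrained to keep the six knot edges out of the interior of the flipping region so that \Cref{knot_BZ} continues to obstruct vertex decomposability (resp.\ constructibility), and then certify the two negative conclusions from \Cref{knot_BZ} and the shellability of $\Delta^{2T}_{22}$ by an explicit shelling order of its facets. The paper does exactly this, recording the resulting facet lists (with a shelling order for $\Delta^{2T}_{22}$) in its tables and in \cite{GitRepo}, so the argument and the computational strategy match.
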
 
\noindent
In \Cref{facets_double_knot} and \Cref{facets_triple_knot} we report the list of facets of these two simplicial complexes.
\begin{table}[h] 
	\resizebox{\textwidth}{!}{
\begin{tabular}{l l l l l l l l}
	\hline\hline
	$[6, 19, v_3, v_{12}]$	&	 $[14, 19, 21, v_{23}]$	&	$[6, 16, v_3, v_{12}]$	&	$[14, 19, v_3, v_{12}]$	&	$[10, 11, 19, v_{2}]$	&	$[14, 19, v_{1}, v_{23}]$	&	$[7,9,14,v_1]$	&	$[7,8,13,v_{23}]$\\
	$[6, 19, v_3, v_{13}]$	&	 $[15, 19, v_{2}, v_{13}]$	&	 $[6, 16, v_3, v_{13}]$	&	 $[6, 9, 13, 17]$	&	 $[10, 15, 19, v_{2}]$	&	 $[14, 20, v_{1}, v_{23}]$	&	 $[7,9,14,v_3]$	&	 $[7,8,13,v_{13}]$\\
	$[6, 19, 21, v_{12}]$	&	 $[17, 18, 21, v_{12}]$	&	 $[11, 16, v_3, v_{13}]$	&	 $[10, 16, 18, v_{2}]$	&	 $[9, 11, 13, 17]$	&	 $[12, 15, v_{1}, v_{23}]$	&	 $[7,14,v_3,v_{12}]$	&	 $[12,14,v_2,v_{12}]$\\
	$[14, 19, 21, v_{12}]$	&	 $[16, 18, 21, v_{12}]$	&	 $[11, 16, v_3, v_{12}]$	&	 $[10, 17, 18, v_{2}]$	&	 $[9, 11, 17, v_{1}]$	&	 $[8, 12, v_{1}, v_{23}]$	&	 $[7,15,v_3,v_{12}]$	&	 $[12,15,v_2,v_{12}]$\\
	$[6, 17, 21, v_{12}]$	&	 $[6, 16, v_{1}, v_{12}]$	&	 $[11, 16, v_{2}, v_{13}]$	&	 $[17, 18, v_{2}, v_{23}]$	&	 $[9, 11, 12, 13]$	&	 $[8, 20, v_{1}, v_{23}]$	&	 $[7,15,v_3,v_{23}]$	&	 $[7,15,v_2,v_{12}]$\\
	$[6, 17, v_{1}, v_{12}]$	&	 $[6, 10, 16, v_{1}]$	&	 $[6, 16, v_{2}, v_{13}]$	&	 $[16, 18, v_{2}, v_{23}]$	&	 $[11, 12, 13, v_{12}]$	&	 $[8, 13, 20, v_{23}]$	&	 $[7,13,15,v_{23}]$	&	 $[7,14,v_2,v_{12}]$\\
	$[17, 18, v_{1}, v_{12}]$	&	 $[14, 16, 21, v_{12}]$	&	 $[6, 10, 16, v_{2}]$	&	 $[11, 16, v_{2}, v_{23}]$	&	 $[12, 13, 14, v_{12}]$	&	 $[8, 9, 12, v_3]$	&	 $[7,13,15,v_{13}]$	&	 $[7,14,v_2,v_{13}]$\\
	$[16, 18, v_{1}, v_{12}]$	&	 $[6, 19, 21, v_{23}]$	&	 $[6, 10, 12, v_{2}]$	&	 $[11, 17, v_{2}, v_{23}]$	&	 $[11, 12, v_3, v_{12}]$	&	 $[7, 8, 9, v_3]$	&	 $[7,15,v_2,v_{13}]$	&	 $[7,14,v_1,v_{13}]$\\
	$[6, 9, 17, v_{1}]$	&	 $[7, 8, 9, v_{1}]$	&	 $[6, 12, v_{2}, v_{13}]$	&	 $[11, 13, 17, v_{23}]$	&	 $[9, 11, 12, v_3]$	&	 $[8, 12, v_3, v_{23}]$	&	 $[12,13,14,v_{13}]$	&	 $[7,8,v_1,v_{13}]$\\
	$[6, 9, 12, v_{1}]$	&	 $[10, 16, 18, v_{1}]$	&	 $[6, 12, 13, v_{13}]$	&	 $[11, 13, 16, v_{23}]$	&	 $[9, 11, 19, v_3]$	&	 $[12, 15, v_3, v_{23}]$	&	 $[13,14,20,v_{13}]$	&	 \\
	$[6, 10, 12, v_{1}]$	&	 $[10, 17, 18, v_{1}]$	&	 $[6, 13, 19, v_{13}]$	&	 $[13, 14, 16, v_{23}]$	&	 $[9, 14, 19, v_3]$	&	 $[12, 15, v_3, v_{12}]$	&	 $[7,8,v_3,v_{23}]$	&	 \\
	$[8, 9, 12, v_{1}]$	&	 $[14, 16, 21, v_{23}]$	&	 $[13, 15, 19, v_{13}]$	&	 $[11, 13, 16, v_{12}]$	&	 $[10, 12, 15, v_{2}]$	&	 $[10, 11, 17, v_{1}]$	&	 $[8,13,20,v_{13}]$	&	 \\
	$[6, 9, 12, 13]$	&	 $[16, 18, 21, v_{23}]$	&	 $[6, 13, 19, v_{23}]$	&	 $[10, 11, 17, v_{2}]$	&	 $[10, 12, 15, v_{1}]$	&	 $[10, 11, 19, v_{1}]$	&	 $[14,20,v_1,v_{13}]$	&	 \\
	$[11, 19, v_3, v_{13}]$	&	 $[17, 18, 21, v_{23}]$	&	 $[13, 15, 19, v_{23}]$	&	 $[13, 14, 20, v_{23}]$	&	 $[10, 15, 19, v_{1}]$	&	 $[9, 11, 19, v_{1}]$	&	 $[8,20,v_1,v_{13}]$	&	 \\
	$[11, 19, v_{2}, v_{13}]$	&	 $[6, 17, 21, v_{23}]$	&	$[6, 13, 17, v_{23}]$	&	$[13, 14, 16, v_{12}]$	& $[15, 19, v_{1}, v_{23}]$		&	$[9, 14, 19, v_{1}]$	&	$[12,14,v_2,v_{13}]$	&\\
	\hline\hline
\end{tabular}}
 \caption{The list of facets of $\Delta^{2T}_{22}$. The order given by the columns (top to bottom and left to right) is a shelling order. The 6 vertices of the double-trefoil knot are labeled $v_F$, according to the face $F$ they correspond to in the barycentric subdivision.}
 \label{facets_double_knot}
\end{table}

\begin{table}[h] 
	\resizebox{\textwidth}{!}{
		\begin{tabular}{l l l l l l l l}
			\hline\hline
			$[0,2,6,v_{13}]$ & $[0,10,21,v_3]$ & $[5,11,12,v_{23}]$ & $[8,17,20,v_{12}]$ & $[0,8,17,v_{12}]$ & $[3,7,11,v_1]$ & $[0,10,21,v_1]$ & $[14,18,19,v_{13}]$\\
			$[1,11,19,v_1]$ & $[10,20,v_3,v_{13}]$ & $[1,14,18,19]$ & $[5,16,v_2,v_{13}]$ & $[11,19,21,v_1]$ & $[2,4,v_2,v_{13}]$ & $[5,12,16,v_{13}]$ & $[5,6,14,v_{12}]$\\
			$[8,16,19,21]$ & $[2,4,v_1,v_{13}]$ & $[5,17,18,v_{23}]$ & $[0,17,v_2,v_{13}]$ & $[3,7,16,v_1]$ & $[11,13,v_1,v_{23}]$ & $[0,1,8,17]$ & $[0,11,21,v_3]$\\
			$[6,14,19,v_{13}]$ & $[1,8,17,20]$ & $[1,10,18,20]$ & $[4,16,v_1,v_{23}]$ & $[2,5,6,v_{23}]$ & $[5,11,18,v_{23}]$ & $[5,14,v_1,v_{12}]$ & $[11,12,13,v_{23}]$\\
			$[1,13,17,v_2]$ & $[6,17,19,v_{13}]$ & $[5,6,10,v_{23}]$ & $[13,17,21,v_2]$ & $[5,17,v_3,v_{23}]$ & $[1,14,19,v_1]$ & $[0,16,v_1,v_{23}]$ & $[2,6,20,v_{13}]$\\
			$[14,19,21,v_1]$ & $[10,18,20,v_{13}]$ & $[0,15,17,v_3]$ & $[7,15,16,v_3]$ & $[6,14,19,21]$ & $[0,6,10,15]$ & $[2,18,19,v_{12}]$ & $[1,10,18,19]$\\
			$[3,7,11,v_3]$ & $[17,19,v_3,v_{23}]$ & $[5,14,21,v_1]$ & $[6,14,20,v_{13}]$ & $[5,6,16,21]$ & $[9,15,16,v_1]$ & $[10,20,21,v_2]$ & $[5,16,21,v_2]$\\
			$[0,1,8,16]$ & $[0,2,v_2,v_{13}]$ & $[9,12,15,16]$ & $[7,11,18,v_{13}]$ & $[1,14,20,v_1]$ & $[10,12,13,v_{12}]$ & $[4,16,v_1,v_{13}]$ & $[5,10,12,v_{23}]$\\
			$[8,11,20,21]$ & $[17,18,19,v_{13}]$ & $[9,16,v_1,v_{13}]$ & $[2,12,20,v_{12}]$ & $[10,18,19,v_{12}]$ & $[0,6,10,v_{23}]$ & $[2,4,v_2,v_{23}]$ & $[6,14,20,v_{12}]$\\
			$[11,20,v_3,v_{13}]$ & $[5,6,10,15]$ & $[3,7,16,v_3]$ & $[11,12,13,v_{12}]$ & $[0,10,15,v_3]$ & $[0,2,v_2,v_{23}]$ & $[11,19,v_2,v_{12}]$ & $[13,17,21,v_1]$\\
			$[5,12,15,16]$ & $[10,13,21,v_1]$ & $[0,6,15,17]$ & $[8,16,20,21]$ & $[0,1,17,v_2]$ & $[1,8,16,20]$ & $[5,11,12,v_{13}]$ & $[2,6,20,v_{12}]$\\
			$[0,2,6,v_{23}]$ & $[14,20,v_1,v_{12}]$ & $[2,19,v_3,v_{12}]$ & $[0,3,11,v_1]$ & $[1,10,20,v_2]$ & $[5,10,12,15]$ & $[5,17,v_2,v_{13}]$ & $[2,7,12,15]$\\
			$[1,11,13,v_1]$ & $[5,11,18,v_{13}]$ & $[2,7,18,v_{23}]$ & $[0,8,16,v_{12}]$ & $[1,11,19,v_2]$ & $[8,11,19,v_{12}]$ & $[4,16,v_2,v_{13}]$  & $[5,17,v_1,v_{12}]$\\
			$[5,17,21,v_2]$ & $[7,10,18,v_{13}]$ & $[1,10,19,v_2]$ & $[2,5,v_3,v_{12}]$ & $[0,3,16,v_3]$ & $[7,10,12,v_{12}]$ & $[0,17,v_3,v_{12}]$ & $[0,16,v_3,v_{12}]$\\
			$[7,11,v_3,v_{13}]$ & $[17,18,19,v_{23}]$ & $[11,13,v_2,v_{12}]$ & $[2,7,15,v_1]$ & $[2,5,6,v_{12}]$ & $[2,7,v_1,v_{23}]$ & $[8,16,19,v_{12}]$ & $[4,16,v_2,v_{23}]$\\
			$[8,11,19,21]$ & $[10,20,21,v_3]$ & $[8,11,20,v_{12}]$ & $[5,6,15,16]$ & $[2,7,12,v_{12}]$ & $[6,16,19,21]$ & $[15,16,19,v_3]$ & $[1,14,18,20]$\\
			$[1,17,20,v_1]$ & $[2,9,15,v_1]$ & $[2,12,20,v_{13}]$ & $[10,19,v_2,v_{12}]$ & $[16,20,21,v_2]$ & $[0,1,16,v_2]$ & $[1,16,20,v_2]$ & $[0,11,21,v_1]$\\
			$[5,17,21,v_1]$ & $[2,19,v_3,v_{23}]$ & $[7,11,v_1,v_{23}]$ & $[10,12,13,v_{23}]$ & $[10,13,21,v_2]$ & $[15,17,19,v_3]$ & $[0,10,v_1,v_{23}]$ & $[1,13,17,v_1]$\\
			$[0,16,v_2,v_{23}]$ & $[2,4,v_1,v_{23}]$ & $[14,18,20,v_{13}]$ & $[7,15,16,v_1]$ & $[17,20,v_1,v_{12}]$ & $[9,12,16,v_{13}]$ & $[7,11,18,v_{23}]$ & $[11,12,20,v_{12}]$\\
			$[5,17,v_3,v_{12}]$ & $[11,20,21,v_3]$ & $[2,18,19,v_{23}]$ & $[7,10,15,v_3]$ & $[7,10,12,15]$ & $[5,6,14,21]$ & $[11,12,20,v_{13}]$ & $[0,3,16,v_1]$\\
			$[2,9,12,15]$ & $[1,11,13,v_2]$ & $[2,9,12,v_{13}]$ & $[7,10,18,v_{12}]$ & $[2,9,v_1,v_{13}]$ & $[6,15,16,19]$ & $[2,5,v_3,v_{23}]$ & $[5,17,18,v_{13}]$\\
			$[0,3,11,v_3]$ & $[10,13,v_2,v_{12}]$ & $[7,10,v_3,v_{13}]$ & $[0,6,17,v_{13}]$ & $[2,7,18,v_{12}]$ & $[10,13,v_1,v_{23}]$ & $[16,19,v_3,v_{12}]$ & $[6,15,17,19]$\\
			\hline\hline
		\end{tabular}}
	\caption{The list of facets of $\Delta^{3T}_{28}$. The 6 vertices of the double-trefoil knot are labeled $v_F$, according to the face $F$ they correspond to in the barycentric subdivision.}
	\label{facets_triple_knot}
\end{table}	
\subsection{Normal $3$-pseudomanifolds}
We conclude this section with a broader class of triangulations, which includes combinatorial manifolds. 
\begin{definition}
	Let $\Delta$ be a pure $d$-dimensional pseudomanifold. $\Delta$ is a \emph{normal $d$-pseudomanifold} if the link of each face of dimension at most $(d-2)$ is connected. 
\end{definition}
\begin{table}[h!]
	\[\begin{array}{l|l l l|l}
	\text{Hom. type} & \text{Min }f(\Delta) &  \quad\quad\quad\quad & \text{Hom. type} & \text{Min }f(\Delta) \\\cline{1-2}\cline{4-5}
	8,0,1,0	&	(1, 12, 51, 80, 40)	&		&	2,4,1,2	&	(1, 24, 155, 272, 136)	\\
	8,0,0,1	&	(1, 14, 61, 96, 48)	&		&	2,2,2,3	&	(1, 24, 155, 272, 136)	\\
	7,2,0,0	&	(1, 11, 42, 64, 32)	&		&	2,2,1,4	&	(1, 28, 194, 344, 172)	\\
	7,0,2,0	&	(1, 11, 45, 72, 36)	&		&	2,2,0,5	&	(1, 27, 181, 320, 160)	\\
	7,0,0,2	&	(1, 13, 55, 88, 44)	&		&	1,8,0,0	&	(1, 25, 155, 268, 134)	\\
	6,2,0,1	&	(1, 15, 69, 112, 56)	&		&	1,6,1,1	&	(1, 28, 193, 340, 170)	\\
	6,0,3,0	&	(1, 15, 72, 120, 60)	&		&	1,4,4,0	&	(1, 28, 190, 336, 168)	\\
	6,0,1,2	&	(1, 15, 76, 128, 64)	&		&	1,4,2,2	&	(1, 28, 188, 332, 166)	\\
	6,0,0,3	&	(1, 16, 83, 140, 70)	&		&	1,4,1,3	&	(1, 29, 193, 340, 170)	\\
	5,4,0,0	&	(1, 16, 78, 128, 64)	&		&	1,4,0,4	&	(1, 28, 194, 344, 172)	\\
	5,2,1,1	&	(1, 17, 94, 160, 80)	&		&	1,2,4,2	&	(1, 31, 214, 380, 190)	\\
	5,2,0,2	&	(1, 18, 91, 152, 76)	&		&	1,2,3,3	&	(1, 30, 205, 364, 182)	\\
	5,0,2,2	&   (1, 19, 109, 188, 94)	&		&	1,2,2,4	&	(1, 30, 207, 368, 184)	\\
	5,0,0,4	&	(1, 18, 110, 192, 96)	&		&	1,2,0,6	&	(1, 31, 218, 388, 194)	\\
	4,4,1,0	&	(1, 19, 104, 176, 88)	&		&	1,0,8,0	&	(1, 30, 214, 384, 192)	\\
	4,4,0,1	&	(1, 19, 104, 176, 88)	&		&	1,0,4,4	&	(1, 33, 235, 420, 210)	\\
	4,2,2,1	&	(1, 20, 120, 208, 104)	&		&	0,8,1,0	&	(1, 31, 196, 340, 170)	\\
	4,2,1,2	&	(1, 20, 120, 208, 104)	&		&	0,6,0,3	&	(1, 32, 214, 376, 188)	\\
	4,2,0,3	&	(1, 20, 122, 212, 106)	&		&	0,4,4,1	&	(1, 32, 217, 384, 192)	\\
	4,0,5,0	&	(1, 22, 139, 244, 122)	&		&	0,4,3,2	&   (1, 31, 210, 372, 186)	\\
	4,0,1,4	&	(1, 23, 144, 252, 126)	&		&	0,4,0,5	&	(1, 33, 240, 428, 214)	\\
	3,6,0,0	&	(1, 21, 114, 192, 96)	&		&	0,2,4,3	&	(1, 38, 290, 520, 260)	\\
	3,4,2,0	&	(1, 22, 128, 220, 110)	&		&	0,2,3,4	&	(1, 32, 244, 440, 220)	\\
	3,4,0,2_a&	(1, 22, 134, 232, 116)	&		&	0,2,2,5	&	(1, 37, 263, 468, 234)	\\
	3,4,0,2_b&	(1, 22, 138, 240, 120)	&		&	0,0,9,0	&	(1, 41, 296, 528, 264)	\\
	3,2,2,2	&	(1, 24, 159, 280, 140)	&		&	0,0,5,4	&	(1, 36, 281, 508, 254)	\\
	3,2,1,3	&	(1, 23, 144, 252, 126)	&		&	0,0,3,6	&	(1, 33, 260, 472, 236)	\\
	2,6,0,1	&	(1, 23, 139, 240, 120)	&		&	0,0,1,8	&	(1, 36, 265, 476, 238)	\\
	2,4,2,1	&	(1, 26, 179, 316, 158)	&       &	0,0,0,9	&	(1, 36, 265, 476, 238)	\\

	\end{array}
	\]
	\caption{A table reporting the minimal $f$-vector found of a balanced triangulation of some normal $3$-pseudomanifolds with singularities.}
	\label{tab:table4}
\end{table}
\noindent
For $d=2$ this class of simplicial complexes coincides with that of triangulated surfaces. Moreover, since the vertex links of a normal $d$-pseudomanifold are normal $(d-1)$-pseudomanifolds, it follows that for a balanced normal $3$-pseudomanifold the vertex links are balanced triangulated surfaces. In this section we report small balanced normal $3$-pseudomanifolds which are not combinatorial $3$-manifolds, obtained applying cross-flips to the barycentric subdivision of the complexes on 9 vertices enumerated by Akhmejanov \cite{Pseudo} modifying a computer program by Sulanke. It is very important to observe that since \Cref{cross-flipsIKN} holds only for combinatorial manifolds there is no connectivity result for the set of balanced normal $3$-pseudomanifolds and hence it might be the case that the barycentric subdivision we start from and the balanced vertex-minimal triangulation lie in a different connected component (of the cross-flip graph). In \Cref{tab:table4} (as well as in \cite{GitRepo}) we simply exhibit the $f$-vector of the complex with minimum number of vertices among those that our program returned after a fixed number of iterations. Still cross-flips clearly preserve the homeomorphism type, so the complexes whose $f$-vectors appear in \Cref{tab:table4} are indeed balanced triangulations of the spaces in \cite{Pseudo}. The first column of \Cref{tab:table4} reports the number of vertices whose link is homeomorphic to $S^2$, $\mathbb{R}\mathbf{P}^{2}$, $S^1\times S^1$ and $\mathbb{R}\mathbf{P}^{2}\#\mathbb{R}\mathbf{P}^{2}$ respectively, since those are the only homeomorphism types that can appear as vertex links of (non-balanced) normal $3$-pseudomanifolds with up to 9 vertices. Note that, except for the case $(3,4,0,2)$, this determines the homeomorphism type. From this numbers one can easily infer the singularity type of the corresponding balanced triangulation, since the number of vertex links not homeomorphic to $S^2$ does not change in the reduction process. 
\begin{remark}
	Observe that reducing the barycentric subdivisions of the normal pseudomanifolds of type $(7,2,0,0)$, $(7,0,2,0)$ and $(7,0,0,2)$ the program returned the suspension of $\Delta^{\mathbb{R}\mathbf{P}^{2}}_{9}$, of the balanced vertex-minimal triangulation of $S^1\times S^1$ and of a balanced vertex-minimal triangulation of $\mathbb{R}\mathbf{P}^{2}\#\mathbb{R}\mathbf{P}^{2}$ respectively.
\end{remark}

\section*{Acknowledgements}
\noindent
I would like to thank Martina Juhnke-Kubitzke for her constant support and precious help. I would also like to thank Isabella Novik and Hailun Zheng for helpful comments on a preliminary version of this paper and Bruno Benedetti for pointing out the results in \cite{BeLu}. Finally I would like to thank Francesco Croce for his careful proofreading.
\bibliographystyle{alpha}
\bibliography{bibliography}
\end{document}